

   \documentclass[twoside,reqno,11pt]{fcaa-var} %

\usepackage{graphicx}
\usepackage{epsfig}

\usepackage{amsthm}
\usepackage{amsmath}
\usepackage{latexsym}
\usepackage{amsfonts}
\usepackage{amssymb}

 \textwidth  12.5cm \textheight 19cm
 \topmargin 0in
 \evensidemargin 1.3cm \oddsidemargin 1.3cm

 \hoffset 0.71cm
 \voffset  1.5cm   
 \baselineskip=18pt
 \parindent=18pt

\newtheoremstyle{theorem}
  {15pt}          
  {15pt}  
  {\sl}  
  {\parindent}
  {\sc}  
  {. }   
  { }    
  {}     
\theoremstyle{theorem}

\newtheorem{theorem}{Theorem}[section]

\newtheoremstyle{defi}
  {15pt}          
  {15pt}  
  {\rm}  
  {\parindent}     
  {\sc}  
  {. }    
  { }    
  {}     
\theoremstyle{defi}
\newtheorem{definition}{Definition}[section]
\newtheorem{remark}{Remark}[section]



 %

\numberwithin{equation}{section}
\theoremstyle{plain}

\theoremstyle{definition}
\newtheorem{problem}[theorem]{Problem}
\newtheorem{assumption}[theorem]{Assumption}
\def\L{{\mathcal L}}
\def\p#1{{\left({#1}\right)}}

\def\R{\mathcal R}
\def\G{{\mathbb G}}

 \usepackage{hyperref} 


 

  \setcounter{page}{1}
  \thispagestyle{empty}


 \title[ON A NON--LOCAL PROBLEM \dots]{ON A NON--LOCAL PROBLEM \\ FOR A MULTI--TERM FRACTIONAL \\  DIFFUSION-WAVE EQUATION}
 \author[\normalsize M. Ruzhansky, N. Tokmagambetov, B. T. Torebek]{\normalsize Michael Ruzhansky $^{1,2}$, Niyaz Tokmagambetov $^{1,3,4}$, \\ and Berikbol T. Torebek $^{1,3,4}$}

 \begin{document}

 \vbox to 1.5cm { \vfill }


 \bigskip \medskip

 \begin{abstract}
This paper deals with the multi-term generalisation of the time-fractional diffusion-wave equation for general operators with discrete spectrum, as well as for positive hypoelliptic operators, with homogeneous multi-point time-nonlocal conditions. Several examples of the settings where our nonlocal problems are applicable are given. The results for the discrete spectrum are also applied to treat the case of general homogeneous hypoelliptic left-invariant differential operators on general graded Lie groups, by using the representation theory of the group.
For all these problems, we show the existence, uniqueness, and the explicit representation formulae for the solutions.
 \medskip

{\it MSC 2010\/}: Primary 35R11;
                  Secondary 33E12, 26A33

 \smallskip

{\it Key Words and Phrases}: time-fractional diffusion-wave equation, Caputo derivative, nonlocal-initial problem, multivariate Mittag-Leffler function, self-adjoint operator. 

 \end{abstract}

 \maketitle

 \vspace*{-20pt}



\section{Introduction}

A time-fractional diffusion-wave equation is a partial integro--differential equation obtained from the classical heat or wave equation by replacing the first or second order time-fractional derivative \cite{M97}. Analysis of diffusion-wave equations with time-fractional derivative, which are obtained from the classical heat and wave equations of mathematical physics by replacing the time derivative by a fractional derivative, constitute a field of growing interest, as is evident from literature surveys. Lots of universal phenomena can be modeled to a greater degree of accuracy by using the properties of these evolution equations.

In \cite{OS74} the authors studied a diffusion equation with time-fractional derivative that contains derivative of first order in space and half order derivative in time variable. They also discussed the relationship between their fractional diffusion equation and a classical diffusion equation. Nigmatullin \cite{N86} pointed out that many of the universal electromagnetic, acoustic, and mechanical responses can be modeled accurately using the diffusion-wave equations with time-fractional derivative. It was shown that the found memory function in one case describes pure diffusion (absence of memory), and in another case turns into the wave equation (full memory). In \cite{A02, A03} Agrawal analysed the fractional evolution equation on a half-line and in a bounded domain. The time-fractional one-dimensional diffusion-wave equation was studied by Gorenflo and Mainardi in \cite{GM98}.

The generalised diffusion equation with time-fractional derivative corresponds to a continuous time random walk model where the characteristic waiting time elapsing between two successive jumps diverge, but the jump length variance remains finite and is proportional to $t^\alpha.$ The exponent $\alpha$ of the mean square displacement proportional to $t^\alpha$ often does not remain constant and changes. To adequately describe these phenomena with fractional models, several approaches have been suggested in the literature (for example, \cite{W86, SW89, CLA08, GM09, L09, TT18c}). However, studies of the generalised time-fractional partial differential equations with three kinds of nonhomogeneous boundary conditions are still limited.

The time-fractional diffusion-wave equation can be written as
\begin{equation}\label{I_1}\partial^\alpha_{+0,t}u(t,x)=k u_{xx}(t,x)+f(t,x),\,t>0,\,0<x<L,\end{equation}
where $x$ and $t$ are the space and time variables, $k$ is an arbitrary positive constant, $f(t,x)$ is a sufficiently smooth function, $0 < \alpha \leq 2$ and $\partial^\alpha_{+0,t}$ is a Caputo fractional derivative of order $\alpha$ defined as Def. \ref{def3}.

When $1 < \alpha < 2,$ equation \eqref{I_1} is the time-fractional wave equation and when $0 < \alpha < 1,$ equation \eqref{I_1} is the time-fractional diffusion equation. When $\alpha = 2,$ it represents a classical wave equation; while if $\alpha = 1,$ it represents a classical diffusion equation.

In some practical situations the underlying processes cannot be described by the equation \eqref{I_1}, but can be modeled using its generalisation the multi-term time-fractional equation that are given by \cite{L11}, namely
\begin{multline}\label{I_2}\partial^{\alpha}_{+0,t}u(t,x)-\sum\limits_{j=1}^m a_j\partial^{\alpha_j}_{+0,t}u(t,x)\\=ku_{xx}(t,x)+f(t,x), \,\, t>0,\,0<x<L,\end{multline} where $0<\alpha_j\leq 1,\, \alpha_j\leq \alpha\leq 2,\, a_j\in \mathbb{R},\, m\in \mathbb{N}.$

Investigation of different initial-value problems for the equation \eqref{I_2} was performed in \cite{L11, JLTB12, LLY15, L17, KMR18}. We also refer some papers on these topics \cite{AKMR17, KMR18, KT19, Yam1, Yam2, Zacher}. In \cite{KMR18} the authors dealt with the non-local initial boundary problem for multi-term time-fractional PDE with Bessel operator. They used Fourier-Bessel series expansion in order to find the explicit solution for the considered problem, yielding also its existence. We also note that, in \cite{RuzTT20} the authors of this paper investigated a time-nonlocal problem for the integro-differential diffusion-wave equation on the Heisenberg group.

Certain types of physical problems can be modelled by heat and wave equations with multi-point (nonlocal) initial conditions. The heat and wave equations with time multi-point conditions can arise from studying the atomic reactors \cite{By2, By3}. Well-posedness and numerical simulations of heat and wave equations with time nonlocal conditions were studied in \cite{By1, Chab, Deh1, Deh2}.

In the present paper we deal with multi-term generalisations of the fractional diffusion-wave equation \eqref{I_2} in two settings:

\begin{itemize}
\item {\bf Operators with discrete spectrum:} Let $D\subset \mathbb{R}^d,\,d\geq 1,$ be a bounded domain with smooth boundary $S$, or an unbounded domain. In a cylindrical domain $\Omega=\{(t,x):\,(0,T)\times D\}$ we consider the equation
\begin{equation}\label{1a}
\partial^{\alpha}_{+0,t}u(t,x)-\sum\limits_{j=1}^m a_j\partial^{\alpha_j}_{+0,t}u(t,x)+\mathcal{L} u(t,x)=f(t,x),
\end{equation}
where $\partial^{\alpha}_{+0,t}$ is a Caputo fractional derivative, $0<\alpha_j\leq 1,\, \alpha_j\leq \alpha\leq 2,\, a_j\in \mathbb{R},\, m\in \mathbb{N},$ $f$ is a given function. The main assumption here is that $\mathcal{L}$ is a linear self-adjoint operator with a discrete spectrum (on a separable Hilbert space $\mathcal{H}$).
This setting will be considered in Section \ref{SEC:opL}. There, we also review a number of problems that are covered by this analysis:
\begin{itemize}
\item Sturm-Liouville problem;
\item differential operators with involution;
\item Bessel operator;
\item fractional Sturm-Liouville operator;
\item harmonic oscillator;
\item anharmonic oscillator;
\item Landau Hamiltonian.
\end{itemize}
\item {\bf Hypoelliptic differential operators (Operators with continuous spectrum):} According to the Rothschild-Stein lifting theorem (\cite{RS76}) and its further developments, a model case for general hypoelliptic differential operators on manifolds are the so-called Rockland operators on graded Lie groups. Thus, let $\mathbb{G}$ be a graded Lie group and let $\mathcal{R}$ be a positive left-invariant homogeneous hypoelliptic operator on $\mathbb{G}$. Then, in the set  $\Omega=\{(t,x):\,(0,T)\times \mathbb{G}\}$,  we consider the equation
\begin{equation}\label{iR: 1}
\partial^{\alpha}_{+0,t}u(t,x)-\sum\limits_{j=1}^m a_j\partial^{\alpha_j}_{+0,t}u(t,x)+\mathcal{R} u(t,x)=f(t,x), \end{equation}
with non-local initial conditions
\begin{equation*}\label{iR: 2}
u(0,x)-\sum\limits_{i=1}^n\mu_i u(T_i,x)=0,\, [\alpha]u_t(0,x)=0,\,x\in \mathbb{G},
\end{equation*}
where $\partial^{\alpha}_{+0,t}$ is a Caputo fractional derivative, $0<\alpha_j\leq 1,\, \alpha_j\leq \alpha\leq 2,\, a_j\in \mathbb{R},\, m\in \mathbb{N}$, and $\mu_i\in \mathbb{R},\,0<T_1\leq T_2\leq ...\leq T_n=T.$ This setting will be analysed in Section \ref{SEC:hyp}. There, we also review a number of problems that are covered by this analysis:
\begin{itemize}
\item Higher order elliptic differential operators;
\item sub-Laplacian on the Heisenberg group;
\item sub-Laplacian on the Carnot group.
\end{itemize}
\end{itemize}

We note that the spectrum of the operator $\mathcal{R}$ on $L^2(\mathbb{G})$ is continuous, however, if $\pi\in\widehat{\mathbb{G}}$ is a representation of $\mathbb{G}$ from the unitary dual $\widehat{\mathbb{G}}$, then the operator symbol $\pi(\mathcal{R})$ of $\mathcal{R}$ at $\pi$ has a discrete spectrum. Therefore, the results obtained in Section \ref{SEC:opL} for \eqref{1a} become applicable with $\mathcal{L}=\pi(\mathcal{R})$. Consequently, we obtain the solution to \eqref{iR: 1} by integrating this solution with respect to the Plancherel measure.

\section{Preliminaries}
\label{Properties}

We start by briefly recalling several notions important for the analysis of this paper.

\subsection{Definitions and properties of fractional operators}
Here, we recall definitions and properties of fractional integration and differentiation operators \cite{SKM87, LG99, N03, KST06}.

First of all, we start by defining the function spaces $C_{\gamma}^{m}[a, b]$, $\gamma\geq-1$, $m\in\mathbb N_{0}:=\mathbb N\cup\{0\}$, $a<\infty$, $b\leq\infty$.
\begin{definition} \cite{Dim82, LG99}.
\label{FS}
Let us consider the set of complex-valued functions $f$ defined on $[a, b]$. Fix $\gamma\geq-1$.
\begin{itemize}
\item We say that $f\in C_{\gamma}^{0}[a, b]:=C_{\gamma}[a, b]$,  if there is a real number $p>\gamma,$ such that
$f(x)=(x-a)^{p}f_{1}(x)$
with $f_{1}\in C[a, b]$.
\item We say that $f\in C_{\gamma}^{m}[a, b]$, $m\in\mathbb N_{0}$, if and only if $f^{(m)}\in C_{\gamma}[a, b].$
\end{itemize}
From \cite{LG99} it follows that $C_{\gamma}[a, b]$ is a vector space and the set of spaces $C_{\gamma}[a, b]$ is ordered by inclusion according to
$$
C_{\gamma}[a, b]\subset C_{\beta}[a, b] \Leftrightarrow \gamma\geq\beta\geq-1.
$$
For further properties of $C_{\gamma}[a, b]$ we refer to \cite{Dim82} and \cite{LG99}.
\end{definition}

\begin{definition} \cite{LG99, KST06} (Riemann-Liouville integral). Let $f\in C_{\gamma}[a, b]$ and $\gamma\geq-1$. The Riemann--Liouville fractional integral $I_{+a} ^\alpha$ of order $\alpha\in\mathbb R$, $\alpha>0$, is defined as
$$
I_{+a} ^\alpha  f\left( t \right) 
={\rm{
}}\frac{1}{{\Gamma \left( \alpha \right)}}\int\limits_a^t {\left(
{t - s} \right)^{\alpha  - 1} f\left( s \right)} ds,
$$
where 
$\Gamma$ denotes the Euler gamma function. When $\alpha=0$ we put
$$
I_{+a}^{0}f(t):=f(t).
$$

Moreover, $I_{+a}^\alpha: C_{\gamma}[a, b]\rightarrow C_{\alpha+\gamma}[a, b] \subset C_{\gamma}[a, b]$ for $\alpha>0$, $\gamma\geq-1$ (see, \cite{LG99}).
\end{definition}


\begin{definition} \cite{KST06} (Riemann-Liouville derivative).
Let $f\in C_{-1}^{m}[a, b]$, $m\in\mathbb N$. The Riemann--Liouville fractional derivative $D_{+a} ^\alpha$ of order $\alpha>0$, $m-1 < \alpha \leq m,$ is defined as
$$
D_{+a} ^\alpha f \left( t \right) = \frac{{d^m }}{{dt^m }}I_{+a} ^{m - \alpha } f \left( t \right).
$$
Note that when $m-1 < \alpha < m,$ we have
$$
D_{+a} ^\alpha f \left( t \right) = {\rm{}}\frac{1}{{\Gamma \left( m-\alpha \right)}}\frac{d^m}{dt^m}\int\limits_a^t {\left({t - s} \right)^{m-1-\alpha} f\left( s \right)} ds.
$$
\end{definition}

\begin{definition}
\label{def3}
\cite{KST06} (Caputo derivative).
Let $f\in C_{-1}^{m}[a, b]$, $m\in\mathbb N$. The Caputo fractional derivative $\partial_{+a}^\alpha$ of order $\alpha\in\mathbb R$, $m-1<\alpha<m,$ is defined as
$$
\partial_{+a}^\alpha  \left[ f \right]\left( t \right) = I_{+a} ^{m - \alpha } f^{(m)}\left( t \right)={\rm{}}\frac{1}{{\Gamma \left( m-\alpha \right)}}\int\limits_a^t {\left({t - s} \right)^{m-1-\alpha} f^{(m)}\left( s \right)} ds.
$$
When $\alpha=m$, we define
$$
\partial_{+a}^\alpha  \left[ f \right]\left( t \right) : = f^{(m)}\left( t \right).
$$
\end{definition}

Note that in monographs \cite{SKM87, N03, KST06}, the authors studied different types of fractional differentiations and their properties. In what follows we formulate statements of necessary properties of integral and integro--differential operators of the Riemann--Liouville type and fractional Caputo operators.

\subsection{Fourier analysis of linear operators with discrete spectrum}
In this section we recall elements of the Fourier analysis developed in the recent investigations \cite{RT16, RT16a, DRT17}.

Let $\mathcal{L}$ be a linear operator in the separable Hilbert space $\mathcal{H}$ and, introduce
$$
\textrm{Dom}\left(\mathcal{L}^\infty\right):=\bigcap\limits_{k=1}^\infty\textrm{Dom}\left(\mathcal{L}^k\right),
$$
where
$\textrm{Dom}\left(\mathcal{L}^k\right)$
is the domain of the iterated operator $\mathcal{L}^k$:
$$\textrm{Dom}\left(\mathcal{L}^k\right):=\left\{f\in \mathcal{H}:\, \mathcal{L}^jf\in \textrm{Dom}\left(\mathcal{L}\right),\,j=0,1,2,...,k-1\right\}.$$

We give the Fr\'{e}chet topology on $\mathcal{H}^\infty_\mathcal{L}$ by the family of semi-norms
\begin{equation}\label{FA-1}\|\varphi\|_{\mathcal{H}^\infty_\mathcal{L}}:=\max_{j\geq k}\left\|\mathcal{L}^j\varphi\right\|_{\mathcal{H}},\, k\in \mathbb{N}_0,\, \varphi\in \mathcal{H}^\infty_\mathcal{L}.\end{equation}

We call $\mathcal{H}_\mathcal{L}^\infty:=\textrm{Dom}\left(\mathcal{L}^\infty\right)$ the space of test functions generated by $\mathcal{L}.$

Analogously to the $\mathcal{H}$-conjugate operator $\mathcal{L}^*$ (to $\mathcal{L}$), define the space of test functions for $\mathcal{L^*}$: $\mathcal{H}_{\mathcal{L^*}}^\infty:=\textrm{Dom}\left(\left(\mathcal{L^*}\right)^\infty\right).$

Now let us introduce the space of linear continuous functionals on $\mathcal{H}^{\infty}_{\mathcal{L}}$ and denote it by $\mathcal{H}^{-\infty}_{\mathcal{L}^*}:=\mathcal{L}\left(\mathcal{H}^{\infty}_{\mathcal{L}}, \mathbb{C}\right)$. We call the last one the space of $\mathcal{L}^*$-distributions. The continuity can be understood in terms of \eqref{FA-1}. Obviously, an embedding $\psi\in \mathcal{H}^{\infty}_{\mathcal{L}^*}\hookrightarrow \mathcal{H}^{-\infty}_{\mathcal{L}^*}$ is true.

In addition, we require that the system of eigenfunctions $\{e_\xi: \xi \in \mathcal{I}\}$ of $\mathcal{L}$ is a Riesz basis in the separable Hilbert space $\mathcal{H}.$ Then its biorthogonal system $\{e^*_\xi: \xi\in \mathcal{I}\}$ is also a Riesz basis in $\mathcal{H}$ (see \cite{B51, G63}). Here the biorthogonality relations mean $$(e_\xi, e^*_\eta) = \delta_{\xi, \eta},$$ where $\delta_{\xi, \eta}$ is the Kronecker delta. Indeed, the function $e^*_\xi$ is an eigenfunction of $\mathcal{L}^*$ corresponding to the eigenvalue $\bar{\lambda}_\xi$ for all $\xi\in \mathcal{I}.$

Denote by $\mathcal{S}(\mathcal{I})$ the space of rapidly decaying functions from $\mathcal{I}$ to $\mathbb{C}$: $\varphi\in \mathcal{S}(\mathcal{I})$ if for arbitrary $m < \infty$ there is a constant $C_{\varphi,m}$ such that $$|\varphi(\xi)|\leq C_{\varphi,m}\langle\xi\rangle^{-m}$$ is valid for any $\xi\in \mathcal{I},$ where $\langle\xi\rangle:=(1+|\xi|)^{1/2}.$

The family of seminorms $p_k:$
$$
p_k(\varphi) :=\sup\limits_{\xi\in \mathcal{I}}\langle\xi\rangle^k|\varphi(\xi)|,
$$
where $k \in \mathbb{N}_0$, defines the topology on $\mathcal{S}(\mathcal{I})$.

Define the $\mathcal{L}$-Fourier transform as the linear mapping
$$
(\mathcal{F}_\mathcal{L}f)(\xi)=(f\mapsto \hat{f}): H^\infty_\mathcal{L}\rightarrow \mathcal{S}(\mathcal{I})
$$
by the formula $$\hat{f}(\xi):=(\mathcal{F}_\mathcal{L}f)(\xi)=(f,e^*_\xi).$$
Also, in a similar way we introduce the $\mathcal{L}^*$-Fourier transform on $H^\infty_\mathcal{L^*}$ by
$$
(\mathcal{F}_\mathcal{L^{\ast}}g)(\xi)=(g,e_\xi),
$$
for $g\in H^\infty_\mathcal{L^*}.$

The ($\mathcal{L^*}$-)$\mathcal{L}$-Fourier transform $$\mathcal{F}_\mathcal{L}: H^\infty_\mathcal{L}\to\mathcal{S}(\mathcal{I}) (\mathcal{F}_\mathcal{L^*}: H^\infty_\mathcal{L^*}\to\mathcal{S}(\mathcal{I}))$$ is a bijective homeomorphism. Its inverse $$\mathcal{F}_\mathcal{L}^{-1}: \mathcal{S}(\mathcal{I})\rightarrow H^\infty_\mathcal{L} (\mathcal{F}_\mathcal{L^*}^{-1}: \mathcal{S}(\mathcal{I})\rightarrow H^\infty_\mathcal{L^*})$$ is given by the formula
$$
\left(\mathcal{F}_\mathcal{L}^{-1}h\right)=\sum\limits_{\xi\in \mathcal{I}}h(\xi)e_{\xi},\, h\in \mathcal{S}(\mathcal{I}),$$
$$\left(\left(\mathcal{F}_\mathcal{L^*}^{-1}g\right)=\sum\limits_{\xi\in \mathcal{I}}g(\xi)e^*_{\xi},\, g\in \mathcal{S}(\mathcal{I})\right),
$$
so that the Fourier inversion formula becomes
$$
f=\sum\limits_{\xi\in \mathcal{I}}\hat{f}(\xi)e_\xi\,\,\,\textrm{for all} \,\,\,f\in H^\infty_\mathcal{L},$$
$$\left( h=\sum\limits_{\xi\in \mathcal{I}}\hat{h}_*(\xi)e^*_\xi\,\,\,\textrm{for all} \,\,\,h\in H^\infty_\mathcal{L^*} \right).
$$
Then the Plancherel identity takes the form $$\|f\|_{\mathcal{H}}=\left(\sum\limits_{\xi\in\mathcal{I}}\hat{f}(\xi)\overline{\hat{f}_*(\xi)}\right)^{1/2}.$$
Due to the equivalence we introduce $\mathcal H$--norm as
$$
\|f\|_{\mathcal H}:=\left(\sum\limits_{\xi\in\mathcal{I}} |\widehat{f}(\xi)|^2\right)^{1/2}.
$$

Now we discuss an application of the $\mathcal  L$--Fourier Analysis. Consider a linear operator $L:{\mathcal H}^{\infty}_{\mathcal L}\to
{\mathcal H}^{\infty}_{\mathcal L}$. Define its symbol by
$$
e_{\xi} \sigma_{L}(\xi):= L e_\xi.
$$
Then we obtain
\begin{equation}\label{EQ:T-op}
Lf=\sum_{\xi\in\mathcal{I}} \sigma_{L}(\xi) \, \widehat{f}(\xi) \, e_\xi.
\end{equation}
Note that the correspondence between symbols and operators is one-to-one. We refer to \cite{RT16} for the detailed analysis of symbols and further symbolic calculus.

In general, we can consider the case when $L:{\mathcal H}^{\infty}_{\mathcal L}\to {\mathcal H}^{-\infty}_{\mathcal L}$.

\smallskip

In particular, we have $\sigma_{\mathcal L}(\xi)=\lambda_{\xi}$.

We define Sobolev spaces $\mathcal H^s_{\mathcal L}$ generated by the operator $\mathcal L$ as
\begin{equation}\label{EQ:HsL}
\mathcal H^s_{\mathcal L}:=\left\{ f\in{\mathcal H}^{-\infty}_{\mathcal L}: {\mathcal L}^{s/2}f\in
\mathcal H\right\},
\end{equation}
for any $s\in\mathbb R$ with the norm $\|f\|_{\mathcal H^s_{\mathcal L}}:=\|{\mathcal L}^{s/2}f\|_{\mathcal H}$. Also, we can understand it as
\begin{equation*}\label{EQ:Hsub-norm}
\|f\|_{\mathcal H^s_{\mathcal L}}:=\|{\mathcal L}^{s/2}f\|_{\mathcal H}:=
\left(\sum_{\xi\in\mathcal{I}} |\sigma_{\mathcal L}(\xi)|^{s} |\widehat{f}(\xi)|^{2}\right)^{1/2}.
\end{equation*}

\section{Time-fractional multi-term diffusion-wave equation for self-adjoint operators}
\label{SEC:opL}

Let $D$ 
be a domain of the space variable $x$. 
In a cylindrical domain $\Omega=\{(t,x):\,(0,T)\times D\}$ we consider the equation
\begin{equation}\label{1}
\partial^{\alpha}_{+0,t}u(t,x)-\sum\limits_{j=1}^m a_j\partial^{\alpha_j}_{+0,t}u(t,x)+\mathcal{L} u(t,x)=f(t,x),
\end{equation}
where $\partial^{\alpha}_{+0,t}$ is a Caputo fractional derivative, $0<\alpha_j\leq 1,\, \alpha_j\leq \alpha\leq 2,\, a_j\in \mathbb{R},\, m\in \mathbb{N},$ $f$ is a given function and $\mathcal{L}$ be a linear self-adjoint operator with a discrete spectrum $\{\lambda_\xi: \xi\in \mathcal{I}\}$ on the separable Hilbert space $\mathcal{H}.$ Denote by $e_\xi$ an eigenfunction corresponding to $\lambda_\xi$ of the operator $\mathcal{L}$. Here, $\mathcal{I}$ is a countable set.

A non-local problem for the equation \eqref{1} is formulated as follows:\\
\begin{problem}\label{Pr-1}
To find solutions $u(t,x)$ of the equation \eqref{1} in $\Omega,$ which satisfy
\begin{equation}\label{2}
u(0,x)-\sum\limits_{i=1}^n\mu_i u(T_i,x)=0,\, [\alpha]u_t(0,x)=0,\,x\in \bar{D},
\end{equation}
where $\mu_i\in \mathbb{R},\,0<T_1\leq T_2\leq ...\leq T_n=T.$

A solution of Problem \ref{Pr-1} is the function $u(t, x),$ where $u\in C_{-1}^{2}([0,T]; \mathcal{H})$ and $\mathcal L u\in C_{-1}([0,T]; \mathcal{H})$.
\end{problem}

\begin{assumption}
\label{A1}
In our further results, we will make the following assumptions. Namely, we suppose that
\begin{equation}\label{E1*}
\left|1-\sum\limits_{i=1}^n\mu_i \theta_\xi(T_i)\right|\geq M>0
\end{equation}
hold for all $\xi\in \mathcal{I}$, where $M$ is some positive constant, and $n,\,\mu_i,\,T_i$ are as in \eqref{2}. Here
\begin{align*}
\theta_\xi(t)=E_{(\alpha-\alpha_1,...,\alpha-\alpha_m, \alpha),1}\left(a_1t^{\alpha-\alpha_1},...,a_mt^{\alpha-\alpha_m}, -\lambda_\xi t^\alpha\right),
\end{align*}
and
\begin{multline*}
E_{(\alpha_1,...,\alpha_{m+1}),\beta}(z_1,...,z_{m+1})\\
=\sum\limits_{k=0}^\infty\sum\limits_{
\begin{array}{l}l_1+l_2+...+l_{m+1}=k,\\
l_1\geq 0,...,l_{m+1}\geq 0
\end{array}
}
\frac{k!}{l_1!...l_{m+1}!}\frac{\prod\limits_{j=1}^{m+1} z_j^{l_j}}{\Gamma\left(\beta+\sum\limits_{j=1}^{m+1}\alpha_jl_j\right)}
\end{multline*}
is the multivariate Mittag-Leffler function \cite{LG99}.

In particular, the condition \eqref{E1*} makes sense when
\begin{equation}\label{E1*-1}
\sum\limits_{i=1}^n\left|\mu_i\right| \left|\theta_\xi(T_i)\right|\leq C<1,
\end{equation}
hold for all $\xi\in \mathcal{I}$, for some $C>0$. 
Due to the estimate (see, \cite{LLY15})
$$
\left|\theta_\xi(T_i)\right|\leq\frac{1}{1+\lambda_\xi T_i}
$$
as $\xi\to\infty$, for all $i$, the condition \eqref{E1*-1} needs to be checked only for finite number of $\xi$'s. And it somehow simplifies the situation.

We note that in the cases when $\alpha=1$ and $\alpha=2$ the condition \eqref{E1*-1} can be replaces by
\begin{equation}\label{E1*-2}
\sum\limits_{i=1}^n\left|\mu_i\right| <1.
\end{equation}
The last condition does not depend on $\xi$.
\end{assumption}

Now as an illustration we give several examples of the settings where our nonlocal problems are applicable. Of course, there are many other examples, here we collect the ones for which different types of partial differential equations have particular importance.

$\mathcal{L}$ in \eqref{1} could be any operator from the following list of examples equipped with the corresponding boundary conditions (for its domain).

\begin{itemize}
  \item {\bf Sturm-Liouville problem.}
\end{itemize}
First, we describe the setting of the Sturm-Liouville operator. Let $l$ be an ordinary second order differential operator in $L^2(a,b)$ generated by the differential expression
\begin{equation}\label{SL} l(u)=-u''(x),\,\,a<x<b,\end{equation} and boundary conditions \begin{equation}\label{SL_B} \alpha_1u'(b)+\beta_1u(b)=0,\,\alpha_2u'(a)+\beta_2u(a)=0,\end{equation} where $\alpha_1^2+\alpha_2^2>0,\,\beta_1^2+\beta_2^2>0,$ and $\alpha_j,\, \beta_j,\,j=1,2,$ are some real numbers.

It is known \cite{3} that the Sturm-Liouville problem for \eqref{SL} with boundary conditions \eqref{SL_B} is self-adjoint in $L^2(a,b).$ It is known that the self-adjoint problem has real eigenvalues and their eigenfunctions form a complete orthonormal basis in $L^2(a,b).$
\begin{itemize}
  \item {\bf Differential operator with involution.}
\end{itemize}
As a second example, we consider the second order nonlocal differential operator in $L^2(0,\pi)$ generated by the differential expression
\begin{equation}\label{DOI} l(u)=u''(x)-\varepsilon u''(\pi-x),\,\,0<x<\pi,\end{equation} and boundary conditions \begin{equation}\label{DOI_B} u(0)=0,\,u(\pi)=0,\end{equation} where $|\varepsilon|<1$ some real number.

It is easy to see that an operator generated by \eqref{DOI}-\eqref{DOI_B} is self-adjoint (see. \cite{ToTa17, KST17, AKT17}). For $|\varepsilon|<1,$ the nonlocal problem \eqref{DOI}-\eqref{DOI_B} has the following eigenvalues
\begin{align*}\lambda_{2k}=4(1+\varepsilon)k^2,\,k\in \mathbb{N},\,\,\, \textrm{and} \,\,\, \lambda_{2k+1}=(1-\varepsilon)(2k+1)^2,\,k\in \mathbb{N}\cup\{0\},\end{align*}
and the corresponding system of eigenfunctions
\begin{align*}&u_{2k}(x)=\sqrt{\frac{2}{\pi}}\sin{2kx},\,k\in \mathbb{N},\\& u_{2k+1}(x)=\sqrt{\frac{2}{\pi}}\sin{(2k+1)x},\,k\in \mathbb{N}\cup \{0\}.\end{align*}

\begin{itemize}
  \item {\bf Bessel operator.}
\end{itemize}
In the third example, consider Bessel operator for the expression
\begin{equation}\label{Bessel1}u''(x)+\frac{1}{x}u'(x)-\frac{\nu^2}{x^2}u(x),\,x\in(0,1),\end{equation}
and the boundary conditions
\begin{equation}\label{Bessel2}\lim\limits_{x\rightarrow 0}xu'(x)=0,\,u(1)=0,\end{equation} where $\nu>0.$

The Bessel operator \eqref{Bessel1}-\eqref{Bessel2} is self-adjoint in $L^2(0,1)$ (\cite{KMR18}). It has real eigenvalues $\lambda_k\simeq k\pi+\frac{\nu\pi}{2}-\frac{\pi}{4},\, k\in \mathbb{N},$ and its system of eigenfunctions $\{\sqrt{x}J_\nu(\lambda x)\}_{k\in \mathbb{N}}$ is complete and orthogonal in $L^2(0,1).$

\begin{itemize}
  \item {\bf Fractional Sturm-Liouville operator.}
\end{itemize}
We consider the operator generated by the integro-differential expression
\begin{equation}\label{FSL}\ell (u)=\partial_{+a}^\alpha D_{b-}^{\alpha}u,\,a<x<b,\end{equation}
and the conditions \begin{equation}\label{FSL_B}I_{b-}^{1-\alpha}u(a)=0,\,I_{b-}^{1-\alpha}u(b)=0,\end{equation} where
$\partial_{+a}^\alpha$ is the left Caputo derivative (see. Def. \ref{def3}) of order $\alpha  \in \left( {0,1} \right]$ of $u,$ $$D_{b-}^\alpha  u\left( {x} \right) = -\frac{1}{{\Gamma \left(
{1 - \alpha } \right)}}\frac{d}{dx}\int\limits_x^b {\left( {\xi-x} \right)^{ -\alpha } u\left( \xi \right)}d\xi$$ is the right Riemann-Liouville derivative of order $\alpha \in
\left( {0,1} \right]$ of $u,$ and $$I_{b-}^\alpha  u\left( {x} \right) = \frac{1}{{\Gamma \left({\alpha } \right)}}\int\limits_x^b {\left( {\xi-x} \right)^{\alpha-1} u\left( \xi \right)}d\xi$$ is the right Riemann-Liouville integral of order $\alpha \in
\left( {0,1} \right]$ of $u,$ \cite{KST06}.
The fractional Sturm-Liouville operator \eqref{FSL}-\eqref{FSL_B} is self-adjoint and positive in $L^2 (a, b)$ (see. \cite{TT16, TT18a, TT18b}). The spectrum of the fractional Sturm-Liouville operator generated by the equations \eqref{FSL}-\eqref{FSL_B} is
discrete, positive and real valued, and the system of eigenfunctions is a complete orthogonal basis in $L^2 (a, b).$
\begin{itemize}
  \item {\bf Harmonic oscillator.}
\end{itemize}
\label{Ex-04}
For any dimension $d\geq1$, let us consider the harmonic oscillator,
$$
\L:=-\Delta+|x|^{2}, \,\,\, x\in\mathbb R^{d}.
$$
Then $\L$ is an essentially self-adjoint operator on $C_{0}^{\infty}(\mathbb R^{d})$. It has a discrete spectrum, consisting of the eigenvalues
$$
\lambda_{k}=\sum_{j=1}^{d}(2k_{j}+1), \,\,\, k=(k_{1}, \cdots, k_{d})\in\mathbb N^{d},
$$
and with the corresponding eigenfunctions
$$
\varphi_{k}(x)=\prod_{j=1}^{d}P_{k_{j}}(x_{j}){\rm e}^{-\frac{|x|^{2}}{2}},
$$
which are an orthogonal basis in $L^{2}(\mathbb R^{d})$. We denote by $P_{l}(\cdot)$ the $l$--th order Hermite polynomial, and
$$
P_{l}(\xi)=a_{l}{\rm e}^{\frac{|\xi|^{2}}{2}}\left(x-\frac{d}{d\xi}\right)^{l}{\rm e}^{-\frac{|\xi|^{2}}{2}},
$$
where $\xi\in\mathbb R$, and
$$
a_{l}=2^{-l/2}(l!)^{-1/2}\pi^{-1/4}.
$$
For more information, see for example \cite{NiRo:10}.

\begin{itemize}
  \item {\bf Anharmonic oscillator.}
\end{itemize}
\label{Ex-05}

Another class of examples -- anharmonic oscillators (see for instance
\cite{HR82}), operators on $L^2(\mathbb R)$ of the form
$$
\L:=-\frac{d^{2k}}{dx^{2k}} +x^{2l}+p(x), \,\,\, x\in\mathbb R,
$$
for integers $k,l\geq 1$ and with $p(x)$ being a polynomial of degree $\leq 2l-1$ with real coefficients.

\begin{itemize}
  \item {\bf Landau Hamiltonian in 2D.}
\end{itemize}
\label{Ex-06}
The next example is one of the simplest and most interesting models of the Quantum Mechanics, that is, the Landau Hamiltonian.

The Landau Hamiltonian in 2D is given by
\begin{equation} \label{eq:LandauHamiltonian}
\L:=\frac{1}{2} \p{\p{i\frac{\partial}{\partial x}-B
y}^{2}+\p{i\frac{\partial}{\partial y}+B x}^{2}},
\end{equation}
acting on the Hilbert space $L^{2}(\mathbb R^{2})$, where $B>0$ is some constant. The spectrum of
$\L$ consists of infinite number of eigenvalues (see \cite{F28, L30}) with
infinite multiplicity of the form
\begin{equation} \label{eq:HamiltonianEigenvalues}
\lambda_{n}=\p{2n+1}B, \,\,\, n=0, 1, 2, \dots \,,
\end{equation}
and the corresponding system of eigenfunctions (see \cite{ABGM15, HH13}) is
{\small
\begin{equation*}
\label{eq:HamiltonianBasis} \left\{
\begin{split}
e^{1}_{k, n}(x,y)&=\sqrt{\frac{n!}{(n-k)!}}B^{\frac{k+1}{2}}\exp\Big(-\frac{B(x^{2}+y^{2})}{2}\Big)(x+iy)^{k}L_{n}^{(k)}(B(x^{2}+y^{2})), \,\,\, 0\leq k, {}\\
e^{2}_{j, n}(x,y)&=\sqrt{\frac{j!}{(j+n)!}}B^{\frac{n-1}{2}}\exp\Big(-\frac{B(x^{2}+y^{2})}{2}\Big)(x-iy)^{n}L_{j}^{(n)}(B(x^{2}+y^{2})), \,\,\, 0\leq j,
\end{split}
\right.
\end{equation*}}
where $L_{n}^{(\alpha)}$ are the Laguerre polynomials given by
$$
L^{(\alpha)}_{n}(t)=\sum_{k=0}^{n}(-1)^{k}C_{n+\alpha}^{n-k}\frac{t^{k}}{k!}, \,\,\, \alpha>-1.
$$
Note that in \cite{RT17b, RT18, RT18b} the wave equation for the Landau Hamiltonian with a singular magnetic field is studied.

\subsection{Well-posedness of Problem \ref{Pr-1}}

In this subsection we give and prove the main results of this paper. The condition \eqref{E1*} below can be interpreted as a multi--point non-resonance condition.

\begin{theorem}\label{th1}
Let $f\in C_{-1}([0,T]; \mathcal{H})$ if $\alpha=1$ or $\alpha=2$, and $f\in C_{-1}^{1}([0,T]; \mathcal{H})$ if otherwise. Suppose that Assumption \ref{A1} holds. Then there exists a unique solution of Problem \ref{Pr-1}, and it can be written as
\begin{equation}
\label{E2}
u(t,x)=\sum\limits_{\xi\in\mathcal{I}}\left[F_\xi(t)+\frac{\sum\limits_{i=1}^n\mu_i F_\xi(T_i)}{1-\sum\limits_{i=1}^n\mu_i\theta_\xi(T_i)}\theta_\xi(t)\right]e_\xi(x),
\end{equation}
where
\begin{align*}
&F_\xi(t)=\int\limits_0^t s^{\alpha-1} E_{(\alpha-\alpha_1,...,\alpha-\alpha_m, \alpha),\alpha}\left(a_1s^{\alpha-\alpha_1},...,a_ms^{\alpha-\alpha_m}, -\lambda_\xi s^\alpha\right)f_\xi(t-s)ds,\\& \theta_\xi(t)=E_{(\alpha-\alpha_1,...,\alpha-\alpha_m, \alpha),1}\left(a_1t^{\alpha-\alpha_1},...,a_mt^{\alpha-\alpha_m}, -\lambda_\xi t^\alpha\right).
\end{align*}
Here
\begin{equation*}
f_\xi(t)=\left(f(t,\cdot),e_\xi\right)_{\mathcal{H}},
\end{equation*}
\begin{multline}\label{E3}
E_{(\alpha_1,...,\alpha_{m+1}),\beta}(z_1,...,z_{m+1})\\
=\sum\limits_{k=0}^\infty\sum\limits_{\begin{array}{l}l_1+l_2+...+l_{m+1}=k,\\
l_1\geq 0,...,l_{m+1}\geq 0\end{array}}\frac{k!}{l_1!...l_{m+1}!}\frac{\prod\limits_{j=1}^{m+1} z_j^{l_j}}{\Gamma\left(\beta+\sum\limits_{j=1}^{m+1}\alpha_jl_j\right)}
\end{multline}
is the multivariate Mittag-Leffler function \cite{LG99}.
\end{theorem}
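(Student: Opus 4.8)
The plan is to use the spectral decomposition furnished by the $\mathcal{L}$-Fourier analysis of Section~\ref{Properties}, reducing the problem to a family of scalar multi-term fractional ordinary differential equations indexed by $\xi\in\mathcal{I}$. Writing the sought solution and the data in the eigenbasis as $u(t,x)=\sum_{\xi}u_\xi(t)e_\xi(x)$ and $f(t,x)=\sum_\xi f_\xi(t)e_\xi(x)$ with $u_\xi(t)=(u(t,\cdot),e_\xi)_{\mathcal{H}}$, and using $\mathcal{L}e_\xi=\lambda_\xi e_\xi$, I would pair \eqref{1} with $e_\xi$. Since the Caputo derivatives act only in $t$ and commute with the pairing, this yields for each $\xi$ the scalar equation
\begin{equation*}
\partial^{\alpha}_{+0,t}u_\xi(t)-\sum_{j=1}^m a_j\partial^{\alpha_j}_{+0,t}u_\xi(t)+\lambda_\xi u_\xi(t)=f_\xi(t),
\end{equation*}
together with the reduced conditions $u_\xi(0)-\sum_{i=1}^n\mu_i u_\xi(T_i)=0$ and $[\alpha]u_\xi'(0)=0$.

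Next I would solve this scalar problem explicitly by the Laplace transform in $t$. The Caputo transform rule turns the equation into an algebraic one whose solution is $\widetilde{u_\xi}(s)=P(s)^{-1}\big(\widetilde{f_\xi}(s)+(\textrm{initial terms})\big)$ with $P(s)=s^\alpha-\sum_j a_j s^{\alpha_j}+\lambda_\xi$. Expanding $P(s)^{-1}$ as a geometric-type series in the lower-order powers $s^{\alpha_j-\alpha}$ and $s^{-\alpha}$ and inverting term by term produces the multivariate Mittag-Leffler function \eqref{E3}; this identifies the homogeneous solution with $u_\xi(0)=1,\ u_\xi'(0)=0$ as $\theta_\xi(t)$ and the zero-data particular solution as the Duhamel convolution $F_\xi(t)$. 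Imposing $[\alpha]u_\xi'(0)=0$ and substituting $u_\xi(t)=u_\xi(0)\theta_\xi(t)+F_\xi(t)$ into the non-local relation gives
\begin{equation*}
u_\xi(0)\Big(1-\sum_{i=1}^n\mu_i\theta_\xi(T_i)\Big)=\sum_{i=1}^n\mu_i F_\xi(T_i),
\end{equation*}
and here Assumption~\ref{A1}, i.e. \eqref{E1*}, is exactly what guarantees the bracketed factor is bounded away from zero, so that $u_\xi(0)$ is uniquely determined. Solving for $u_\xi(0)$ and reassembling $\sum_\xi u_\xi(t)e_\xi(x)$ reproduces formula \eqref{E2}.

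The main work — and the step I expect to be the principal obstacle — is to show that the formal series \eqref{E2} genuinely defines a function in the regularity class required by Problem~\ref{Pr-1}, namely $u\in C^2_{-1}([0,T];\mathcal{H})$ with $\mathcal{L}u\in C_{-1}([0,T];\mathcal{H})$. For this I would establish uniform estimates on $\theta_\xi$, on $F_\xi$, and on their $t$-derivatives up to the order dictated by $[\alpha]$, using the decay bound $|\theta_\xi(T_i)|\le (1+\lambda_\xi T_i)^{-1}$ quoted from \cite{LLY15} together with analogous bounds for $F_\xi$ and $\lambda_\xi F_\xi$. The crucial gain is the factor $\lambda_\xi$ absorbed by these estimates, which lets the Plancherel norm $\|\mathcal{L}u(t,\cdot)\|_{\mathcal{H}}^2=\sum_\xi\lambda_\xi^2|u_\xi(t)|^2$ be dominated by the $C_{-1}$-norms of the data $f_\xi$; the hypothesis $f\in C^1_{-1}$ (or $C_{-1}$ when $\alpha\in\{1,2\}$) is what supplies the needed summability. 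Differentiability in $t$ and the legitimacy of moving $\partial^\alpha_{+0,t}$ and $\mathcal{L}$ inside the sum then follow from the locally uniform convergence of the differentiated series.

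Finally, uniqueness is immediate from the same decomposition: a solution of the homogeneous problem ($f\equiv0$) with homogeneous non-local data has Fourier coefficients satisfying the scalar homogeneous ODE with $u_\xi'(0)=0$, whence $u_\xi(t)=u_\xi(0)\theta_\xi(t)$, and the non-local relation forces $u_\xi(0)\big(1-\sum_i\mu_i\theta_\xi(T_i)\big)=0$; by \eqref{E1*} this yields $u_\xi(0)=0$ for every $\xi$, hence $u\equiv0$.
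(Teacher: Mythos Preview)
Your proposal is correct and follows essentially the same route as the paper: eigenfunction expansion to reduce to scalar multi-term fractional ODEs, solution of each scalar problem via the multivariate Mittag--Leffler function, determination of the initial value $u_\xi(0)$ from the non-local condition using \eqref{E1*}, convergence of the series via the Mittag--Leffler decay bound from \cite{LLY15} together with Plancherel, and uniqueness by the same Fourier-coefficient argument. The only cosmetic difference is that the paper simply cites \cite{LG99, KMR18} for the representation $u_\xi(t)=u_\xi(0)\theta_\xi(t)+F_\xi(t)$, whereas you propose to derive it yourself by Laplace transform---which is exactly how those references obtain it.
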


\subsubsection{Proof of the existence result.}
We give a full proof of Problem \ref{Pr-1}. Now we seek a generalised solution by $\mathcal L$--Fourier method. As it was discussed, let $\{\lambda_\xi\}_{\xi\in\mathcal I}$ be the system of eigenvalues and $\{e_\xi(x)\}_{\xi\in\mathcal I}$ be the system of eigenfunctions for the operator $\mathcal{L}$. Since the system of eigenfunctions $e_\xi(x)$ is an orthonormal basis in $\mathcal{H},$ the functions $u(t,x)$ and $f(t,x)$ can be expanded in $\mathcal{H}$ as
\begin{equation}
\label{3}
u(t,x)=\sum\limits_{\xi\in \mathcal{I}}u_\xi(t)e_\xi(x),
\end{equation}
\begin{equation}
\label{4}
f(t,x)=\sum\limits_{\xi\in \mathcal{I}}f_\xi(t)e_\xi(x),
\end{equation}
where $u_\xi(t)$ is unknown and
\begin{equation}
\label{5}
f_\xi(t)=\left(f(t, \cdot), e_\xi\right)_{\mathcal{H}}.
\end{equation}
Substituting functions \eqref{3} and \eqref{4} into the equation \eqref{1}, we obtain the following equations for the unknown functions $u_\xi(t)$:
\begin{equation}
\label{6}
\partial^{\alpha}_{+0}u_\xi(t)-\sum\limits^m_{j=1}a_j\partial^{\alpha_j}_{+0}u_\xi(t)+\lambda_\xi u_\xi(t)=f_\xi(t),
\end{equation}
for all $\xi\in\mathcal I$.

According to \cite{LG99, KMR18}, the solutions of the equations \eqref{6} satisfying initial conditions
\begin{equation}
\label{6*}
u_\xi(0)-\sum\limits_{i=1}^n\mu_i u_\xi(T_i)=0,\,[\alpha]u'_\xi(0)=0,
\end{equation}
can be represented in the form
\begin{equation}
\label{7}
u_\xi(t)=F_\xi(t)+\frac{\sum\limits_{i=1}^n\mu_iF_\xi(T_i)}{1-\sum\limits_{i=1}^n\mu_i \theta_\xi(T_i)}\theta_\xi(t),
\end{equation}
for all $\xi\in\mathcal I$.

We note-that the above expression is well-defined in view of the non-resonance conditions \eqref{E1*}. Finally, based on \eqref{7}, we rewrite our formal solution as \eqref{E2}.

\subsubsection{Convergence of the formal solution.}
Here, we prove convergence of the obtained infinite series corresponding to functions $u(t, x)$, $\partial_{+0,t}^\alpha u(t, x)$, and $\mathcal L u(t, x)$. To prove the convergence of these series, we use the estimate for the multivariate Mittag-Leffler function \eqref{E3}, obtained in \cite{LLY15}, of the form
\begin{align*}
\left|E_{(\alpha-\alpha_1,...,\alpha-\alpha_m, \alpha),\beta}\left(z_1,...,z_{m+1}\right)\right|\leq \frac{C}{1+|z_1|}.
\end{align*}
Let us first prove the convergence of the series \eqref{E2}. From the above estimate, for the functions $F_\xi(t)$ and $\theta_\xi(t),$ we obtain the following inequalities
\begin{align*}
&\left|F_\xi(t)\right|\leq \frac{C\left|\left(f(t, \cdot),e_\xi\right)_{\mathcal{H}}\right|}{1+\lambda_\xi},\\&\left|\theta_\xi(t)\right|\leq \frac{C}{1+\lambda_\xi t^\alpha},\,\,\,C=const>0.
\end{align*}
Hence, from these estimates it follows that
\begin{align*}
|u_\xi(t)|&\leq \left|F_\xi(t)\right|+\frac{\sum\limits_{i=1}^n|\mu_i||F_\xi(T_i)|}{1-\sum\limits_{i=1}^n|\mu_i| |\theta_\xi(T_i)|}|\theta_\xi(t)|\\
& \leq C\frac{\left|\left(f(t, \cdot),e_\xi\right)_{\mathcal{H}}\right|}{1+\lambda_\xi}+\frac{C}{1-M} \sum\limits_{i=1}^n|\mu_i|\frac{\left|\left(f(T_i, \cdot), e_\xi\right)_{\mathcal{H}}\right|}{1+\lambda_\xi}\frac{1}{1+\lambda_\xi t^\alpha}.
\end{align*}
For any fixed $t\in[0, T]$, this implies
\begin{align*}
\|u(t,\cdot)\|_{\mathcal{H}}^{2}& = \|\sum\limits_{\xi\in \mathcal{I}}u_\xi(t)e_\xi\|_{\mathcal{H}}^{2} \\
&= \sum\limits_{\xi\in \mathcal{I}}|u_\xi(t)|^{2} \\
&\leq C \sum\limits_{\xi\in \mathcal I}\left[\frac{\left|\left(f(t, \cdot), e_\xi\right)_{\mathcal{H}}\right|^{2}}{(1+\lambda_\xi)^{2}}+ \frac{\sum\limits_{i=1}^n\left|\left(f(T_i, \cdot), e_\xi\right)_{\mathcal{H}}\right|^{2}}{(1+\lambda_\xi)^{2}}\right]\\
&\leq C \sum\limits_{\xi\in \mathcal I}\left|\left(f(t, \cdot), e_\xi\right)_{\mathcal{H}}\right|^{2}
\end{align*}
and
\begin{equation*}
\begin{split}
\|\mathcal L &u(t, \cdot)\|_{\mathcal{H}}^{2}\\
&\leq C \sum\limits_{\xi\in \mathcal I}\left[\frac{\left|\lambda_\xi\right|^{2} \left|\left(f(t, \cdot), e_\xi\right)_{\mathcal{H}}\right|^{2}}{(1+\lambda_\xi)^{2}}+ \frac{\sum\limits_{i=1}^n\left|\lambda_\xi\right|^{2} \left|\left(f(T_i, \cdot), e_\xi\right)_{\mathcal{H}}\right|^{2}}{(1+\lambda_\xi)^{2}}\right].
\end{split}
\end{equation*}

Since $f(t, \cdot)\in \mathcal{H}$, the series above converge, and we obtain
$$
\|u(t, \cdot)\|_{\mathcal{H}}<\infty
$$
and
$$
\|\mathcal L u(t, \cdot)\|_{\mathcal{H}}<\infty,
$$
for all $t\in[0, T]$.

The convergence of the series corresponding to $u(\cdot, x)$, $\mathcal L u(\cdot, x)$, and $\partial_{+0,t}^\alpha u(\cdot, x)$ for almost all fixed $x$ follows from \cite[Theorem 4.1]{LG99}.

\subsubsection{Proof of the uniqueness result.}
Suppose that there are two solutions $u_1(t,x)$ and $u_2(t,x)$ of Problem \ref{Pr-1}. We denote $$u(t,x)=u_1(t,x)-u_2(t,x).$$ Then the function $u(t,x)$ satisfies the equation \eqref{1} and nonlocal-initial conditions \eqref{2}.

Consider the function
\begin{equation}
\label{U1}
u_\xi(t)=\left(u(t,\cdot), e_\xi\right)_{\mathcal{H}},\,\,\xi\in\mathcal{I}.
\end{equation}
Applying the operator $\left(\partial^{\alpha}_{+0}-\sum\limits^m_{j=1}a_j\partial^{\alpha_j}_{+0}\right)$ to \eqref{U1} from homogeneous equation \eqref{1}, we obtain
\begin{align*}
\partial^{\alpha}_{+0}u_\xi(t)-\sum\limits^m_{j=1}a_j\partial^{\alpha_j}_{+0}u_\xi(t)&=\left(\partial^{\alpha}_{+0,t}u(t,\cdot)
-\sum\limits^m_{j=1}a_j\partial^{\alpha_j}_{+0,t}u(t,\cdot), e_\xi\right)_{\mathcal{H}}\\
& =\left(u(t,\cdot), \mathcal{L} e_\xi\right)_{\mathcal{H}}=\lambda_\xi u_\xi(t).
\end{align*}

Thus, the function \eqref{U1} is the solution of the homogeneous equation \eqref{6} with conditions \eqref{6*}. It is known \cite{LG99} that, the solution of equation \eqref{6} satisfying Cauchy conditions
$$
u_\xi(0) = \rho,\,\rho=const, \,\, [\alpha]u'_\xi(0)=0
$$
can be represented in the form
\begin{align}\label{s1*}u_\xi(t)=\rho E_{(\alpha-\alpha_1,...,\alpha-\alpha_m,\alpha),1} \left(a_1t^{\alpha-\alpha_1},...,a_mt^{\alpha-\alpha_m},-\lambda_\xi t^\alpha\right).\end{align}
Considering $u_\xi(0) = \rho,$ from the first condition in \eqref{6*}, we have
$$\rho-\sum\limits_{i=1}^n\mu_i u_\xi(T_i)=0.$$ Then from \eqref{s1*} it follows
$$\rho\left(1-\sum\limits_{i=1}^n\mu_i E_{(\alpha-\alpha_1,...,\alpha-\alpha_m, \alpha),1}\left(a_1T_i^{\alpha-\alpha_1},...,a_mT_i^{\alpha-\alpha_m},-\lambda_\xi T_i^\alpha\right)\right)=0.$$
If the multi-point conditions \eqref{E1*} hold for all $\xi\in \mathcal I,$ then we obtain
\begin{equation*}
u_\xi(t)=\left(u(t,\cdot), e_\xi\right)_{\mathcal{H}}=0,
\end{equation*}
for all $\xi\in\mathcal{I}$.

Further, by the completeness of the system $\{e_\xi(x)\}_{\xi\in\mathcal I}$ in $\mathcal{H},$ we obtain $$u(t,x)\equiv 0\,\,\, \textrm{for all}\,\,\,t\geq 0,\, x\in \bar{D}.$$ Uniqueness of the solution of Problem \ref{Pr-1} is proved, completing the proof of Theorem \ref{th1}.

\subsection{Non--uniqueness of the solution}
\begin{remark}
\label{Rem1}
If for some $\xi\in\mathcal{I}_1\subset\mathcal{I}$ the expression \eqref{E1*} equals to zero, then the homogeneous Problem \ref{Pr-1} has a nonzero solution.
\end{remark}

Indeed let $a_j=0,\, j=1,...,m$ and $n=1$ in Problem \ref{Pr-1}. Then we reformulate Problem \ref{Pr-1}: {\it To find solutions $u(t, x)$ of the equation
\begin{equation}
\label{1*}
\partial^{\alpha}_{+0,t}u(t, x)+\mathcal{L} u(t, x)=f(t, x),
\end{equation} in $\Omega,$ which satisfy
\begin{equation}\label{2*}
u(0, x)-\mu u(T, x)=0,\, [\alpha]u_t(0, x)=0,\,x\in \bar{D},
\end{equation}
where $\mu\in \mathbb{R},\,1<\alpha\leq 2.$}

If $$|\mu| |E_{\alpha,1}(-\lambda_\xi T^\alpha)|=1$$ for some $\xi\in \mathcal{I}_1\subset \mathcal{I},$ then the homogeneous problem \eqref{1*}-\eqref{2*} has a nonzero solution
\begin{equation}\label{NZS1}
u(t, x)=\sum\limits_{\xi\in\mathcal{I}_1\subset \mathcal{I}}b_\xi E_{\alpha,1}(-\lambda_\xi t^\alpha) e_\xi(x),
\end{equation}
where $b_\xi$ is any real number, and $E_{\alpha,1}\left(z\right)$ is the Mittag-Leffler function
$$E_{\alpha,1}\left(z\right)=\sum\limits_{k=0}^{\infty}
\frac{z^k}{\Gamma\left(\alpha k+1\right)}.$$
Indeed, since the system of eigenfunctions $e_\xi(x)$ is an orthonormal basis in $\mathcal{H},$ the function $u(t,x)$ can be expanded in $\mathcal{H}$ as \eqref{3}
\begin{equation*}
u(t,x)=\sum\limits_{\xi\in \mathcal{I}}u_\xi(t)e_\xi(x),
\end{equation*} where $u_\xi(t)$ is the solution of the equation
\begin{equation}
\label{6**}
\partial^{\alpha}_{+0}u_\xi(t)+\lambda_\xi u_\xi(t)=0,
\end{equation}
with non-local conditions \begin{equation}
\label{6***}
u_\xi(0)-\mu u_\xi(T)=0,\,[\alpha]u'_\xi(0)=0,
\end{equation}
for all $\xi\in \mathcal{I}$.

The general solution of the equation \eqref{6**} is
$$u_\xi(t)=\rho_1E_{\alpha,1}(-\lambda_\xi t^\alpha)+\rho_1tE_{\alpha,2}(-\lambda_\xi t^\alpha),$$ where $\rho_1$ and $\rho_2$ are arbitrary real numbers. Then from \eqref{6***} we obtain
$$
\rho\left(1-\mu E_{\alpha,1}(-\lambda_\xi t^\alpha)\right)=0, \,\,\, \xi\in \mathcal{I}.
$$
If for some $\xi\in \mathcal{I}_1\subset \mathcal{I}$ the condition $|\mu| |E_{\alpha,1}(-\lambda_\xi T^\alpha)|=1$ is true, then the problem \eqref{6**}-\eqref{6***} has a nonzero solution
\begin{equation*}
u_\xi(t)=\rho_\xi E_{\alpha,1}(-\lambda_\xi t^\alpha).
\end{equation*}
Finally, we have \eqref{NZS1}.

The statement of Remark \ref{Rem1} can be extended in the following way.
\begin{theorem}\label{th2}
Let $f\in C_{-1}([0,T]; \mathcal{H})$ if $\alpha=1$ or $\alpha=2$, and $f\in C_{-1}^{1}([0,T]; \mathcal{H})$ if otherwise. Assume that the conditions
\begin{equation}
\label{E1**}
\sum\limits_{i=1}^n|\mu_i| |\theta_\xi(T_i)|=1
\end{equation}
hold for some $\xi\in \mathcal{I}_1\subset \mathcal{I}.$ Then the solution of Problem \ref{Pr-1} is not unique. Moreover, a solution exists if and only if the following condition holds
\begin{equation}\label{E6}
\left(f(t, \cdot),e_\xi\right)_{\mathcal{H}}=0,\,\xi\in\mathcal{I}_1\subset \mathcal{I}.
\end{equation}
If there exists a solution of Problem \ref{Pr-1}, it can be represented in the form
\begin{equation*}
u(t, x)=\sum\limits_{\xi\in{\mathcal{I}_1}}C_\xi\theta_\xi(t)e_\xi(x) + \sum\limits_{\xi\in{\mathcal{I}\backslash\mathcal{I}_1}}\left[F_\xi(t)+\frac{\sum\limits_{i=1}^n\mu_i F_\xi(T_i)}{1-\sum\limits_{i=1}^n\mu_i\theta_\xi(T_i)}\theta_\xi(t)\right]e_\xi(x),
\end{equation*} where $C_\xi$ are some real numbers, and
\begin{align*}
F_\xi(t)=\int\limits_0^t s^{\alpha-1} E_{(\alpha-\alpha_1,...,\alpha-\alpha_m, \alpha),\alpha}\left(a_1s^{\alpha-\alpha_1},...,a_ms^{\alpha-\alpha_m}, -\lambda_\xi s^\alpha\right)f_\xi(t-s)ds,
\end{align*}
and
\begin{align*}
\theta_\xi(t)=E_{(\alpha-\alpha_1,...,\alpha-\alpha_m, \alpha),1}\left(a_1t^{\alpha-\alpha_1},...,a_mt^{\alpha-\alpha_m}, -\lambda_\xi t^\alpha\right).
\end{align*}
\end{theorem}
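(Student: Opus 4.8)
The plan is to run the same $\mathcal{L}$-Fourier reduction used for Theorem \ref{th1}, but now to carry out a Fredholm-type alternative on the finitely many resonant modes. First I would expand $u(t,x)=\sum_{\xi\in\mathcal{I}}u_\xi(t)e_\xi(x)$ and $f(t,x)=\sum_{\xi\in\mathcal{I}}f_\xi(t)e_\xi(x)$ as in \eqref{3}--\eqref{4}, so that \eqref{1} decouples into the scalar multi-term equations \eqref{6} subject to the nonlocal conditions \eqref{6*}, one for each $\xi\in\mathcal{I}$. A preliminary observation that I would record at the outset is that the index set $\mathcal{I}_1$ on which \eqref{E1**} holds is finite: by the decay bound $|\theta_\xi(T_i)|\le (1+\lambda_\xi T_i)^{-1}$ quoted after \eqref{E1*-1}, the left-hand side of \eqref{E1**} tends to $0$ as $\lambda_\xi\to\infty$, so it can equal $1$ only for finitely many $\xi$. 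This is what makes the resonant contribution harmless for convergence.

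Next I would split $\mathcal{I}=(\mathcal{I}\setminus\mathcal{I}_1)\cup\mathcal{I}_1$ and treat the two families separately. For $\xi\in\mathcal{I}\setminus\mathcal{I}_1$ the non-resonance denominator $1-\sum_{i=1}^n\mu_i\theta_\xi(T_i)$ does not vanish, so the scalar problem \eqref{6}--\eqref{6*} is uniquely solvable and its solution is exactly \eqref{7}, reproducing the second sum in the claimed formula; the convergence of this sum, together with that of the series for $\mathcal{L}u$ and $\partial^\alpha_{+0,t}u$, is inherited verbatim from the convergence argument of Theorem \ref{th1}. The heart of the matter is the resonant family $\xi\in\mathcal{I}_1$.

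For a fixed resonant $\xi$ I would use that, since $F_\xi(0)=0$ and $[\alpha]F_\xi'(0)=0$, every solution of \eqref{6} obeying the second (Cauchy-type) condition in \eqref{6*} has the form $u_\xi(t)=F_\xi(t)+\rho\,\theta_\xi(t)$ with $\rho=u_\xi(0)$ free, where $\rho\,\theta_\xi$ is the homogeneous solution \eqref{s1*}. Substituting into the first condition in \eqref{6*} yields the single scalar equation $\rho\bigl(1-\sum_{i=1}^n\mu_i\theta_\xi(T_i)\bigr)=\sum_{i=1}^n\mu_i F_\xi(T_i)$. Under the resonance \eqref{E1**} the bracket vanishes, so this collapses to the compatibility relation $\sum_{i=1}^n\mu_i F_\xi(T_i)=0$, which is the Fredholm alternative in disguise: if it holds then $\rho$ is completely unconstrained, giving the one-parameter freedom $C_\xi\theta_\xi$ and hence non-uniqueness, whereas if it fails there is no solution. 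Finally I would record that the stated compatibility condition \eqref{E6}, namely $f_\xi\equiv 0$ on $\mathcal{I}_1$, forces $F_\xi\equiv 0$ and therefore both makes the compatibility relation hold and removes the $F_\xi$-term from the resonant block, producing precisely the first sum $\sum_{\xi\in\mathcal{I}_1}C_\xi\theta_\xi e_\xi$; since $\mathcal{I}_1$ is finite this sum is automatically convergent and may be added to the non-resonant series without affecting its convergence.

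The step I expect to be the main obstacle is the resonant reduction itself: establishing cleanly that the second condition in \eqref{6*} cuts the solution space of \eqref{6} down to the one-parameter family $F_\xi+\rho\theta_\xi$ (equivalently, verifying $F_\xi(0)=0$ and $[\alpha]F_\xi'(0)=0$, and that the extra homogeneous degree of freedom beyond $\theta_\xi$ is excluded when $1<\alpha\le 2$), and then correctly extracting the scalar compatibility relation $\sum_{i=1}^n\mu_i F_\xi(T_i)=0$ and relating it to the projection condition \eqref{E6}. Once this scalar Fredholm alternative is in hand, the existence-and-representation claim and the non-uniqueness claim follow by assembling the modal solutions, and all convergence issues are settled by the finiteness of $\mathcal{I}_1$ together with the Theorem \ref{th1} estimates.
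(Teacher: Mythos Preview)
Your proposal is correct and follows the same Fourier-reduction route as the paper, which simply appeals back to the formal solution \eqref{E2} derived for Theorem \ref{th1} and reads off the alternative from \eqref{E1**} and \eqref{E6}. In fact your argument is considerably more detailed than the paper's two-line proof: the finiteness of $\mathcal{I}_1$, the explicit Fredholm reduction $\rho\bigl(1-\sum_i\mu_i\theta_\xi(T_i)\bigr)=\sum_i\mu_iF_\xi(T_i)$, and the observation that the natural compatibility condition is $\sum_i\mu_iF_\xi(T_i)=0$ (for which \eqref{E6} is a sufficient pointwise hypothesis) are all left implicit in the paper.
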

\begin{proof}
From the proof of Theorem \ref{th1} we know that the formal solution of Problem \ref{Pr-1} has the form \eqref{E2}, that is,
\begin{equation*}
u(t,x)=\sum\limits_{\xi\in\mathcal{I}}\left[F_\xi(t)+\frac{\sum\limits_{i=1}^n\mu_i F_\xi(T_i)}{1-\sum\limits_{i=1}^n\mu_i\theta_\xi(T_i)}\theta_\xi(t)\right]e_\xi(x).
\end{equation*}
Thus, from \eqref{E1**} and \eqref{E6} we verify the validity of the statement of the theorem.
\end{proof}

\subsection{Examples}
Here, we provide as examples several special cases of Problem \ref{Pr-1}. We show that in these cases the condition \eqref{E1*} can be simplified.
\subsubsection{Diffusion equation} We assume that $\alpha=1,$ $a_j=0,\,j=\overline{1,m},$ and $\mu_i\in \mathbb{R},\,i=\overline{1,n}.$ Then, instead of Problem \ref{Pr-1}, we have the classical diffusion equation
$$u_t(t, x)+\mathcal{L}u(t, x)=f(t, x),\,(t, x)\in\Omega,$$
with time-nonlocal condition
$$u(0, x)=\sum\limits_{i=1}^n\mu_i u(T_i, x).$$
Then it is easy to show that the solvability conditions for Problem \ref{Pr-1} has the following form
$$
\sum\limits_{i=1}^n|\mu_i|<1.
$$

\subsubsection{Wave equation} We assume that $\alpha=2,$ $a_j=0,\,j=\overline{1,m},$ and $\mu_i\in \mathbb{R},\,i=\overline{1,n}.$ Let $\mathcal{L}$ is a classical Sturm-Liouville operator with Dirichlet conditions on $[0,1].$ Then, instead of Problem \ref{Pr-1}, we have the classical wave equation
$$u_{tt}(t, x)-u_{xx}(t, x)=f(t, x),\,(t, x)\in(0,T)\times(0,1),$$
with Dirichlet conditions $$u(t, 0)=0,\,u(t, 1)=0,\,\,t\in[0,T],$$ and
time-nonlocal conditions $$u(0, x)=\sum\limits_{i=1}^n\mu_i u(T_i, x),\,u_t(0, x)=0,\,x\in[0,1].$$
Then the solvability conditions for Problem \ref{Pr-1} reduces to \eqref{E1*-2}, namely, to the following form
$$
\sum\limits_{i=1}^n|\mu_i|<1.
$$

\subsubsection{Sub-diffusion equation} We assume that $0<\alpha<1,$ $a_j=0,\,j=\overline{1,m},$ and $\mu_i\in \mathbb{R},\,i=\overline{1,n}.$ Let $\mathcal{L}$ is a Dirichlet-Laplacian $\Delta_D$ on $D\subset \mathbb{R}^d,\,d\geq 2.$ Then, instead of Problem \ref{Pr-1}, we have the classical wave equation
$$\partial_{+0,t}^\alpha u(t, x)-\Delta_Du(t, x)=f(t, x),\,(t, x)\in(0,T)\times D,$$
with time-nonlocal conditions $$u(0, x)=\sum\limits_{i=1}^n\mu_i u(T_i, x), \,x\in\bar{D}.$$
Then it is easy to show that the solvability conditions for Problem \ref{Pr-1} reduces \eqref{E1*-2}, namely, to the following form
$$
\sum\limits_{i=1}^n|\mu_i|<1.
$$

\section{Time-fractional diffusion-wave equation for hypoelliptic differential operators}
\label{SEC:hyp}

In this section we discuss the analogue of the considered problems for the case of the hypoelliptic differential operators. According to the Rothschild-Stein lifting theorem (\cite{RS76}) and its further developments, a model case for general hypoelliptic operators on manifolds are the so-called Rockland operators on graded Lie groups. Such operators have a continuous spectrum, but their global symbols defined by the representation theory of the group, have the discrete spectrum. Thus, the formulae established in the previous section become applicable.

\subsection{Graded Lie groups}
Let us start this section by recalling notations and definitions from the book of Folland and Stein \cite{FS-book} (or \cite[Section 3.1]{FR16}).

We say that a Lie algebra $\mathfrak g$ is graded if there exists a vector space decomposition
$$
\mathfrak g=\bigoplus_{j=1}^\infty V_j
$$
such that $[V_i,V_j]\subset V_{i+j}$. Let $\G$ be a connected simply connected Lie group. Then we call $\G$ is a graded Lie group if its Lie algebra $\mathfrak g$ is graded. In the case when the first stratum $V_1$ generates $\mathfrak g$ as an algebra, we say it is a stratified group.

Graded Lie groups are nilpotent and homogeneous with a canonical choice of dilations \cite{RuzSur}. Namely, we define $A$ by setting
$AX=\nu_j X$ for $X\in V_j$. Then the dilations on $\mathfrak g$ can be represented by
$$
D_r:={\rm Exp}(A\ln r), \; r>0.
$$
Then the number
$$
Q:=\nu_1+\ldots+\nu_n={\rm Tr}\, A
$$
is the {\em homogeneous dimension} of $\G$.

Henceforth, let $\G$ be a graded Lie group. In \cite{Rockland}, Rockland gave an original definition of `Rockland' operators using the language of representations.
In \cite{HN-79}, Helffer and Nourrigat showed that a left-invariant differential operator of homogeneous positive degree satisfies the Rockland condition if and only if it is hypoelliptic. Such operators are called {Rockland operators}.
Namely, by following \cite[Definition 4.1.1]{FR16},
we call $\R$ a Rockland operator on the graded Lie group $\G$ if $\R$ is a homogeneous (of an order $\nu\in\mathbb N$) and left-invariant differential operator satisfying the Rockland condition:
\begin{itemize}
\item the operator $\pi(\R)$ is injective on $\mathcal H^{\infty}_{\pi}$, that is,
$$
\pi(\R)v=0 \,\,\, \Rightarrow \,\,\, v=0, \,\,\, \text{for all} \,\,\, v\in \mathcal H^{\infty}_{\pi}
$$
for any representation $\pi\in \widehat{\G}$, excluding the trivial case.
\end{itemize}
Here we denote by $\widehat{\G}$ the unitary dual of $\G$, $\mathcal H^{\infty}_{\pi}$ is the vector space of smooth vectors for $\pi\in\widehat{\G}$,
and $\pi(\R)$ stands for the infinitesimal representation of $\R$ which is an element of the universal enveloping algebra of the garaded Lie group $\G$. The readers are referred to \cite[Chapter 4]{FR16} for more details on graded Lie groups and Rockland operators.

We say that a vector $v$ from the separable Hilbert space $\mathcal{H}_{\pi}$ is a smooth if the function
$$
\G\ni x \mapsto \pi(x)v\in \mathcal{H}_{\pi}
$$
is of class $C^{\infty}$, for a representation $\pi$ of the graded Lie group $\G$ on $\mathcal{H}_{\pi}$.
Denote by $\mathcal{H}_{\pi}^{\infty}$ the space of all smooth vectors of a representation $\pi$.
Let $\pi:\mathcal{H}_{\pi}\to\mathcal{H}_{\pi}$ be a strongly continuous representation of $\G$. We denote
$$
d\pi(X)v:=\lim_{t\rightarrow 0}\frac{1}{t}\left(\pi(\exp_{\G}(tX))v-v\right)
$$
for every $X\in \mathfrak g$ and $v\in \mathcal{H}_{\pi}^{\infty}$.
Then $d\pi$ is a representation of the graded Lie algebra $\mathfrak{g}$ on $\mathcal{H}_{\pi}^{\infty}$, namely, the
infinitesimal representation associated to $\pi$, for example, see \cite[Proposition 1.7.3]{FR16}).
Here, we will often write $\pi$ instead of $d\pi$ and $\pi(X)$ instead of $d\pi(X)$ for all $X\in\mathfrak{g}$.

By the Poincar\'{e}-Birkhoff-Witt theorem, any left-invariant differential operator $T$ on the graded Lie group $\G$ has the form
\begin{equation}\label{PBW_for}
T=\sum_{|\alpha|\leq M}c_{\alpha}X^{\alpha}.
\end{equation}
Let $\mathfrak{U}(\mathfrak g)$ be the universal enveloping algebra of $\mathfrak g$. Then the form \eqref{PBW_for} allows us to write $T\in\mathfrak{U}(\mathfrak g)$.
Note that in \eqref{PBW_for} all but finitely many of $c_{\alpha}\in \mathbb{C}$ are equal to zero and $X^{\alpha}=X_{1}\cdots X_{|\alpha|},$ with $X_{j}\in \mathfrak{g}$.
Thus, the family of infinitesimal representations $\{\pi(T),\pi \in \widehat{\G}\}$ gives a field of operators, that is, the symbol associated with $T$.

Let $\R$ be a positive homogeneous Rockland operator of degree $\nu>0$. Then, for $\pi\in \widehat{\G}$ from \eqref{PBW_for} we have the infinitesimal representation of $\R$ related to $\pi$, namely,
$$
\pi(\R)=\sum_{[\alpha]=\nu}c_{\alpha}\pi(X)^{\alpha},
$$
where $[\alpha]=\nu_{1}\alpha_{1}+\cdots+\nu_{n}\alpha_{n}$ and $\pi(X)^{\alpha}=\pi(X^{\alpha})=\pi(X_{1}^{\alpha_{1}}\cdots X_{n}^{\alpha_{n}})$. Here $[\alpha]$ is the homogeneous degree of $\alpha$.

From now on we assume that $\R$ and $\pi(\R)$ are self-adjoint operators acting on $L^{2}(\G)$ and $\mathcal{H}_{\pi}$ with dense domains $\mathcal{D}(\G)\subset L^{2}(\G)$ and $\mathcal{H}_{\pi}^{\infty}\subset \mathcal{H}_{\pi}$, respectively (see e.g. \cite[Proposition 4.1.15]{FR16}).

For the spectral measures $E$ and $E_{\pi}$ corresponding to $\R$ and $\pi(\R)$, by the spectral theorem \cite[Theorem VIII.6]{RS80}, we have
$$
\R=\int_{\mathbb{R}}\lambda dE(\lambda)$$ and $$\pi(\R)=\int_{\mathbb{R}}\lambda dE_{\pi}(\lambda).
$$
From here we can observe that the representations $\pi(\R)$ of a positive self--adjoint Rockland operator $\R$ are also positive.

Note that for an arbitrary measurable bounded function $\phi$ on $\mathbb{R}$ we obtain
\begin{equation}\label{prel_for_1}
\mathcal{F} (\phi(\R)f)(\pi)=\phi(\pi(\R))\widehat{f}(\pi),
\end{equation}
for all $f\in L^{2}(\G)$ (see e.g. \cite[Corollary 4.1.16]{FR16}).
The fact that the spectrum of the representation $\pi(\R)$ ($\pi \in \widehat{\G}\backslash \{1\}$) is discrete and lies in $\mathbb R_{>0}$ is showed by Hulanicki, Jenkins and Ludwig in \cite{HJL85}.
Namely, it allows one to choose an orthonormal basis of $\mathcal{H}_{\pi}$ such that the infinite matrix associated to the self-adjoint operator $\pi(\R)$ ($\pi \in \widehat{\G}\backslash \{1\}$) is
\begin{equation}\label{pi_R_matrix}
\pi(\R)=\begin{pmatrix} \pi_{1}^{2} & 0 & \ldots &\ldots \\
0 & \pi_{2}^{2} & 0 & \ldots\\
\vdots & 0 & \ddots &\\
\vdots & \vdots &  & \ddots \end{pmatrix},
\end{equation}
where $\pi_{j}\in (0, +\infty)$.

Now we briefly recall some elements of the Fourier analysis on $\G$. We start by defining the group Fourier transform of $f$ at $\pi$, that is,
$$
\mathcal{F}_{\G}f(\pi)\equiv \widehat{f}(\pi)\equiv \pi(f):=\int_{\G}f(x)\pi(x)^{*}dx,
$$
for $f\in L^{1}(\G)$ and $\pi \in \widehat{G}$, with integration with respect to the biinvariant Haar measure on the graded Lie group $\G$.
It implies a linear mapping $\widehat{f}(\pi): \mathcal{H}_{\pi}\to\mathcal{H}_{\pi}$ that can be represented by an infinite matrix when we choose a basis for $\mathcal{H}_{\pi}$. Thus, we obtain
$$
\mathcal{F}_{\G}(\R f)(\pi)=\pi(\R)\widehat{f}(\pi).
$$
In what follows, we will use the same basis (for $\mathcal{H}_{\pi}$) as in \eqref{pi_R_matrix} when we write $\widehat{f}(\pi)_{m,k}$.

By using Kirillov's orbit method one can construct explicitly the Plancherel measure $\mu$ on the dual space $\widehat{\G},$ see c.g. \cite{CG90}. In the particular case, we obtain the Fourier inversion formula
\begin{equation}\label{EQ: Group-FT-g}
f(x)=\int_{\widehat{\G}}\mathrm{Tr}[\widehat{f}(\pi)\pi(x)]d\mu(\pi),
\end{equation}
where $\mathrm{Tr}$ is the trace operator, and $d\mu(\pi)$ is the Plancherel measure on $\widehat{\G}$.
Moreover, $\pi(f)=\widehat{f}(\pi)$ is the Hilbert-Schmidt operator, that is,
$$
\|\pi(f)\|^{2}_{{\rm HS}}={\rm Tr}(\pi(f)\pi(f)^{*})<\infty,
$$
and $\widehat{\G}\ni \pi \mapsto \|\pi(f)\|^{2}_{{\rm HS}}$ is an integrable function with respect to $\mu$. Furthermore, the following Plancherel formula holds
\begin{equation}\label{planch_for}
\int_{\G}|f(x)|^{2}dx=\int_{\widehat{\G}}\|\pi(f)\|^{2}_{{\rm HS}}d\mu(\pi),
\end{equation}
see e.g. \cite{CG90} or \cite{FR16}.

We refer to \cite{FR16} for more details of the Fourier analysis on the graded Lie groups.

Thus, in our extension of the obtained result to graded Lie groups, we will work with positive Rockland operators $\R.$ To give some examples,
this setting includes:
\begin{itemize}
  \item {\bf Homogeneous elliptic differential operators.}
\end{itemize}
Let $\mathbb{G}=\mathbb{R}^n.$ Then a Rockland operator $\R$ may be any positive homogeneous partial differential operator of elliptic type, with constant coefficients. For example, $m$-Laplacian operator
$$\R=(-\Delta)^m$$ or higher order elliptic operator $$\R=(-1)^m\sum\limits_{j=1}^n a_j\left(\frac{\partial}{\partial x_j}\right)^{2m},\,\,\, a_j>0,\,\,m\in \mathbb{N}.$$

\begin{itemize}
  \item {\bf Hypoelliptic operators on the Heisenberg group.}
\end{itemize}
If $\mathbb{G}=\mathbb{H}^n$ is a Heisenberg group. Then, we can take $$\R=(-\mathcal{L}_{\mathbb{H}^n})^m,\,\,\,m\in \mathbb{N},$$ where $\mathcal{L}_{\mathbb{H}^n}$ is a sub-Laplacian on the Heisenberg group $\mathbb{H}^n.$
\begin{itemize}
  \item {\bf Hypoelliptic operators on Carnot groups.}
\end{itemize}
Let $\mathbb{G}$ be a Carnot group (or a stratified group) with vectors $X_1,...,X_s$ spanning the first stratum. Then, a Rockland operator $\R$ can be given by
$$\R=(-1)^m\sum\limits_{j=1}^sa_jX_j^{2m},\,\,\,a_j>0,\,\,m\in \mathbb{N}.$$ In particular case, if $m=1,$ then $\R$ is a sub-Laplacian operator.

\begin{problem}\label{R: Pr-1} Let $\G$ be a graded Lie group with a homogeneous dimension $d\geq 3$ and let $\R$ be a positive self--adjoint Rockland operator acting on $L^{2}(\G)$. Assume that Let $f\in C_{-1}([0,T]; L^2(\G))$ if $\alpha=1$ or $\alpha=2$, and $f\in C_{-1}^{1}([0,T]; L^2(\G))$ if $0<\alpha<1$ or $1<\alpha<2$. In a domain $\Omega=\{(t,x):\,(0,T)\times \G\}$ consider the equation
\begin{equation}\label{R: 1}
\partial^{\alpha}_{+0,t}u(t,x)-\sum\limits_{j=1}^m a_j\partial^{\alpha_j}_{+0,t}u(t,x)+\R u(t,x)=f(t,x),
\end{equation}
with non-local initial conditions
\begin{equation}\label{R: 2}
u(0,x)-\sum\limits_{i=1}^n\mu_i u(T_i,x)=0,\, [\alpha]u_t(0,x)=0,\,x\in \G,
\end{equation}
where $\partial^{\alpha}_{+0,t}$ is a Caputo fractional derivative, $0<\alpha_j\leq 1,\, \alpha_j\leq \alpha\leq 2,\, a_j\in \mathbb{R},\, m\in \mathbb{N}$, and $\mu_i\in \mathbb{R},\,0<T_1\leq T_2\leq ...\leq T_n=T.$

We seek a solution $u\in C_{-1}^{2}([0,T]; L^{2}(\G))$ of Problem \eqref{R: 1}--\eqref{R: 2} such that $\R u\in C_{-1}([0,T]; L^{2}(\G))$.
\end{problem}

The condition \eqref{R: E1*} below can be interpreted as a multi--point non-resonance condition.

\begin{theorem}\label{R: th1}
Let $f\in C_{-1}([0,T]; L^2(\G))$ if $\alpha=1$ or $\alpha=2$, and $f\in C_{-1}^{1}([0,T]; L^2(\G))$ if otherwise. Assume that the conditions
\begin{equation}\label{R: E1*}
\sum\limits_{i=1}^n|\mu_i| |\widehat {\theta}(t,\pi)_{l}|\leq M<1,\, M>0
\end{equation}
hold for all $l\in \mathbb N$ (where $M$ is a constant), where $n,\,\mu_i,\,T_i$ are as in \eqref{2}. Then there exists a unique solution of Problem \ref{R: Pr-1}, and it can be written as
\begin{equation}
\label{R: E2}
u(t,x)=\int_{\widehat{\G}}\mathrm{Tr}[\widehat{K}(t, \pi)\pi(x)]d\mu(\pi),
\end{equation}
where
$$
\widehat{K}(t, \pi)_{l, k}=\widehat {F}(t,\pi)_{l,k}+\frac{\sum\limits_{i=1}^n\mu_i \widehat {F}(T_i,\pi)_{l,k}}{1-\sum\limits_{i=1}^n\mu_i \widehat{\theta}(T_i,\pi)_{l}} \widehat {\theta}(t,\pi)_{l},
$$
for all $\pi \in \widehat{\G}$ and $l,k\in \mathbb{N}$. Here
\begin{align*}
&\widehat {F}(t,\pi)_{l,k}=\int\limits_0^t s^{\alpha-1} E_{(\alpha-\alpha_1,...,\alpha-\alpha_m, \alpha),\alpha}\left(a_1s^{\alpha-\alpha_1},...,a_m s^{\alpha-\alpha_m}, -\pi_l^2 s^\alpha\right)\widehat{f}(t-s,\pi)_{l,k}ds,\\
&\widehat {\theta}(t,\pi)_{l}=E_{(\alpha-\alpha_1,...,\alpha-\alpha_m, \alpha),1}\left(a_1t^{\alpha-\alpha_1},...,a_m t^{\alpha-\alpha_m}, -\pi_l^2 t^\alpha\right).
\end{align*}
\end{theorem}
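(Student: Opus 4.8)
The plan is to reduce the operator equation on $\G$ to the scalar multi-term fractional problem already solved in Theorem \ref{th1}, by passing to the group Fourier transform and exploiting the fact that although $\R$ has continuous spectrum on $L^2(\G)$, each symbol $\pi(\R)$ has discrete spectrum. First I would apply the group Fourier transform $\mathcal{F}_\G$ in the space variable $x$ to both sides of \eqref{R: 1}. Since $\mathcal{F}_\G(\R u)(t,\pi)=\pi(\R)\widehat{u}(t,\pi)$ by \eqref{prel_for_1}, and the Caputo operators act only in $t$, the transformed equation reads
\[
\partial^{\alpha}_{+0,t}\widehat{u}(t,\pi)-\sum_{j=1}^m a_j\partial^{\alpha_j}_{+0,t}\widehat{u}(t,\pi)+\pi(\R)\widehat{u}(t,\pi)=\widehat{f}(t,\pi),
\]
a fractional differential equation in $t$ for the Hilbert-Schmidt operator-valued function $\pi\mapsto\widehat{u}(t,\pi)$, together with the transformed nonlocal conditions $\widehat{u}(0,\pi)-\sum_{i=1}^n\mu_i\widehat{u}(T_i,\pi)=0$ and $[\alpha]\,\widehat{u}_t(0,\pi)=0$.

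Next I would choose, for each nontrivial $\pi\in\widehat{\G}$, the orthonormal basis of $\mathcal H_\pi$ in which $\pi(\R)$ is the diagonal matrix \eqref{pi_R_matrix} with entries $\pi_l^2>0$. In this basis the equation decouples along rows: writing out the matrix coefficients, each scalar function $t\mapsto\widehat{u}(t,\pi)_{l,k}$ satisfies, for fixed $(l,k)$,
\[
\partial^{\alpha}_{+0,t}\widehat{u}(t,\pi)_{l,k}-\sum_{j=1}^m a_j\partial^{\alpha_j}_{+0,t}\widehat{u}(t,\pi)_{l,k}+\pi_l^2\,\widehat{u}(t,\pi)_{l,k}=\widehat{f}(t,\pi)_{l,k},
\]
which is precisely \eqref{6} with the eigenvalue $\lambda_\xi$ replaced by $\pi_l^2$, subject to conditions of the form \eqref{6*}. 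Invoking the scalar representation \eqref{7} established in Theorem \ref{th1} (now with $\pi_l^2$ in place of $\lambda_\xi$) yields exactly the stated formula for $\widehat{K}(t,\pi)_{l,k}$; the non-resonance hypothesis \eqref{R: E1*} guarantees, via the reverse triangle inequality, that $|1-\sum_{i=1}^n\mu_i\widehat{\theta}(T_i,\pi)_l|\geq 1-M>0$ uniformly in $l$ and $\pi$, so every coefficient is well defined. Reassembling via the Fourier inversion formula \eqref{EQ: Group-FT-g} produces the claimed representation \eqref{R: E2}.

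It remains to verify the regularity and uniqueness. For convergence I would mirror the estimates of Section \ref{SEC:opL}: using the bound $|E_{(\alpha-\alpha_1,\dots,\alpha-\alpha_m,\alpha),\beta}(z_1,\dots,z_{m+1})|\leq C/(1+|z_1|)$ from \cite{LLY15} with $z_1=-\pi_l^2 t^\alpha$, one gets $|\widehat{\theta}(t,\pi)_l|\leq C/(1+\pi_l^2 t^\alpha)$ and a matching estimate for $\widehat{F}(t,\pi)_{l,k}$, hence pointwise control of $\widehat{K}(t,\pi)_{l,k}$ and of the entries of $\pi(\R)\widehat{K}(t,\pi)$ by $|\widehat{f}(\cdot,\pi)_{l,k}|$ (the factor $\pi_l^2/(1+\pi_l^2)$ staying bounded). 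Summing over $l,k$ gives $\|\widehat{K}(t,\pi)\|_{\hil}\leq C\,\|\widehat{f}(t,\pi)\|_{\hil}+C\sum_{i}\|\widehat{f}(T_i,\pi)\|_{\hil}$, and integrating against the Plancherel measure via \eqref{planch_for} shows $u(t,\cdot),\ \R u(t,\cdot)\in L^2(\G)$ for every $t$ (here the homogeneous dimension hypothesis $d\geq 3$ secures integrability of the resulting Plancherel integral), so $u\in C_{-1}^2$ and $\R u\in C_{-1}$ in $t$ as required. Uniqueness follows by applying $\mathcal{F}_\G$ to the difference of two solutions: each matrix coefficient solves the homogeneous scalar problem, which under \eqref{R: E1*} forces $\widehat{u}(t,\pi)_{l,k}\equiv 0$, whence $u\equiv 0$ by injectivity of the Fourier transform.

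The main obstacle, and the step requiring the most care, is the functional-analytic bookkeeping: justifying that $\partial^{\alpha}_{+0,t}$ commutes with $\mathcal{F}_\G$ and with passage to the diagonalizing basis, that the row-wise decoupling of the operator-valued equation is legitimate, and that the termwise Mittag-Leffler estimates may be integrated against the Plancherel measure for Hilbert-Schmidt-valued functions of $t$. Once this is in place, the remainder is a faithful transcription of the discrete-spectrum argument of Theorem \ref{th1}, now carried out fibrewise in $\pi$ and assembled through the Plancherel formula.
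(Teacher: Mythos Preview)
Your proposal is correct and follows essentially the same route as the paper: group Fourier transform, diagonalisation of $\pi(\R)$ via \eqref{pi_R_matrix}, reduction to the scalar problem \eqref{6}--\eqref{6*} solved by \eqref{7}, the Mittag--Leffler bound from \cite{LLY15} summed to a Hilbert--Schmidt estimate, and Plancherel to close. One small remark: the paper's convergence argument does not actually invoke the hypothesis $d\geq 3$---the $L^2$ bounds follow directly from Plancherel and $f(t,\cdot)\in L^2(\G)$---so that aside in your third paragraph is unnecessary.
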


\subsection{Proof of Theorem \ref{R: th1}}
\subsubsection{Proof of the existence result.}
We give a full proof of Problem \ref{R: Pr-1}.
Let us take the group Fourier
transform of \eqref{R: 1} with respect to $x\in\G$ for all $\pi\in\widehat{G}$, that is,
\begin{equation}\label{fou_cauchy1}
\partial^{\alpha}_{+0,t}\widehat {u}(t,\pi)-\sum\limits_{j=1}^m a_j\partial^{\alpha_j}_{+0,t}\widehat {u}(t,\pi)+\pi(\R)\widehat{u}(t,\pi)=\widehat{f}(t,\pi).
\end{equation}
Taking into account \eqref{pi_R_matrix}, we rewrite the matrix equation \eqref{fou_cauchy1} componentwise as an infinite system of equations of the form
\begin{equation}\label{R: 6}
\partial^{\alpha}_{+0,t}\widehat {u}(t,\pi)_{l,k}-\sum\limits_{j=1}^m a_j\partial^{\alpha_j}_{+0,t}\widehat {u}(t,\pi)_{l,k}+\pi_{l}^{2}\widehat{u}(t,\pi)_{l,k}=\widehat{f}(t,\pi)_{l,k},
\end{equation}
for all $\pi \in \widehat{\G}$, and any $l, k\in \mathbb{N}$. Now let us decouple the system given by the matrix equation \eqref{fou_cauchy1}. For this, we fix an arbitrary representation $\pi$, and a general entry $(l, k)$ and we treat each equation given by \eqref{R: 6} individually.

According to \cite{LG99, KMR18}, the solutions of the equations \eqref{R: 6} satisfying initial conditions
\begin{equation}
\label{R: 6*}
\widehat {u}(0,\pi)_{l,k}-\sum\limits_{i=1}^n\mu_i \widehat {u}(T_i,\pi)_{l,k}=0,\,[\alpha] \partial_{t}\widehat {u}(0,\pi)_{l,k}=0,
\end{equation}
can be represented in the form
\begin{equation}
\label{R: 7}
\widehat {u}(t,\pi)_{l,k} = \widehat {F}(t,\pi)_{l,k}+\frac{\sum\limits_{i=1}^n\mu_i \widehat {F}(T_i,\pi)_{l,k}}{1-\sum\limits_{i=1}^n\mu_i \widehat{\theta}(T_i,\pi)_{l}} \widehat {\theta}(t,\pi)_{l},
\end{equation}
for all $\pi \in \widehat{\G}$ and any $l,k\in \mathbb{N}$, where
\begin{align*}
&\widehat {F}(t,\pi)_{l,k}=\int\limits_0^t s^{\alpha-1} E_{(\alpha-\alpha_1,...,\alpha-\alpha_m, \alpha),\alpha}\left(a_1s^{\alpha-\alpha_1},...,a_m s^{\alpha-\alpha_m}, -\pi_l^2 s^\alpha\right)\widehat{f}(t-s,\pi)_{l,k}ds,\\
&\widehat {\theta}(t,\pi)_{l}=E_{(\alpha-\alpha_1,...,\alpha-\alpha_m, \alpha),1}\left(a_1t^{\alpha-\alpha_1},...,a_m t^{\alpha-\alpha_m}, -\pi_l^2 t^\alpha\right).
\end{align*}

Then there exists a solution of Problem \ref{R: Pr-1}, and it can be written as
\begin{equation}
\label{R: E2}
u(t,x)=\int_{\widehat{\G}}\mathrm{Tr}[\widehat{K}(t, \pi)\pi(x)]d\mu(\pi),
\end{equation}
where
$$
\widehat{K}(t, \pi)_{l, k}=\widehat {F}(t,\pi)_{l,k}+\frac{\sum\limits_{i=1}^n\mu_i \widehat {F}(T_i,\pi)_{l,k}}{1-\sum\limits_{i=1}^n\mu_i \widehat{\theta}(T_i,\pi)_{l}} \widehat {\theta}(t,\pi)_{l},
$$
for all $\pi \in \widehat{\G}$ and $l,k\in \mathbb{N}$.

We note-that the above expression is well-defined in view of the non-resonance conditions \eqref{R: E1*}. Finally, based on \eqref{R: 7}, we rewrite our formal solution as \eqref{R: E2}.

\subsubsection{Convergence of the formal solution.}
Here, we prove convergence of the obtained infinite series corresponding to functions $u(t, x)$, $\partial_{+0,t}^\alpha u(t, x)$, and $\R u(t, x)$. To prove the convergence of these series, we use the estimate for the multivariate Mittag-Leffler function \eqref{E3}, obtained in \cite{LLY15}, of the form
\begin{align*}
\left|E_{(\alpha-\alpha_1,...,\alpha-\alpha_m, \alpha),\beta}\left(z_1,...,z_{m+1}\right)\right|\leq \frac{C}{1+|z_1|}.
\end{align*}
Let us first prove the convergence of the series \eqref{R: E2}. From the above estimate, for the functions $F_\xi(t)$ and $\theta_\xi(t),$ we obtain the following inequalities
\begin{align*}
&\left|\widehat {F}(t,\pi)_{l,k}\right|\leq C \frac{\left|\widehat{f}(t,\pi)_{l,k}\right|}{1+\pi_l^2},\,\left|\widehat {\theta}(t,\pi)_{l}\right|\leq \frac{C}{1+\pi_l^2t^\alpha},\,\,\,\,C=const>0.
\end{align*}
Hence, from these estimates it follows that
\begin{align*}
|\widehat {u}(t,\pi)_{l,k}|&\leq \left|\widehat {F}(t,\pi)_{l,k}\right|+\sum\limits_{i=1}^n|\mu_i||\widehat {F}(T_i,\pi)_{l,k}||\widehat {\theta}(t,\pi)_{l}|\\
& \leq C \frac{\left|\widehat{f}(t,\pi)_{l,k}\right|}{1+\pi_l^2} + C \sum\limits_{i=1}^n|\mu_i| \frac{\left|\widehat{f}(T_i,\pi)_{l,k}\right|}{1+\pi_l^2} \frac{1}{1+\pi_l^2t^\alpha}.
\end{align*}

Thus, since for any Hilbert-Schmidt operator $A$ one has
$$\|A\|^{2}_{{\rm HS}}=\sum_{l,k}|(A\phi_{l},\phi_{k})|^{2}$$
for any orthonormal basis $\{\phi_{1},\phi_{2},\ldots\}$, then we can consider the infinite sum over $l,k$ of the inequalities provided by \eqref{R: 7}, we have
\begin{equation}
\label{eq4}
\|\widehat {u}(t,\pi)\|^{2}_{{\rm HS}} \leq C \|(1+\pi(\R))^{-1})\widehat{f}(t,\pi)\|^{2}_{{\rm HS}}.
\end{equation}
Thus, integrating both sides of \eqref{eq4} against the Plancherel measure $\mu$ on $\widehat{\G}$, then using the Plancherel identity \eqref{planch_for} we obtain
\begin{align*}
\|u(t, \cdot)\|_{L^{2}(\G)} \leq C \|(I+\R)^{-1}f(t, \cdot)\|_{L^{2}(\G)}
\end{align*}
and
\begin{align*}
\|\R u(t, \cdot)\|_{L^{2}(\G)} \leq C \|f(t, \cdot)\|_{L^{2}(\G)},
\end{align*}
for any fixed $t\in[0, T]$.

Since $f(t, \cdot)\in L^{2}(\G)$, the series above converge, and we obtain
$$
\|u(t, \cdot)\|_{L^{2}(\G)}<\infty\,\,\,\textrm{and}\,\,\,\|\R u(t, \cdot)\|_{L^{2}(\G)}<\infty,
$$
for all $t\in[0, T]$.

The convergence of the series corresponding to $u(\cdot, x)$, $\R u(\cdot, x)$, and $\partial_{+0,t}^\alpha u(\cdot, x)$ for almost all fixed $x\in\G$ follows from \cite[Theorem 4.1]{LG99}.

The uniqueness result can be proved in analogy to the previous arguments.

\section*{Acknowledgements}
The authors were supported in parts by the FWO Odysseus Project 1 grant G.0H94.18N: Analysis and Partial Differential Equations. The first author was supported in parts by the EPSRC grant EP/R003025/1 and by the Leverhulme Grant RPG-2017-151. The second author was supported by the Ministry of Education and Science of the Republic of Kazakhstan Grant AP05130994. The third author was supported by Ministry of Education and Science of the Republic of Kazakhstan Grant AP05131756. No new data was collected or generated during the course of research.





 \it

 \noindent
$^1$ Department of Mathematics: \\ Analysis,
Logic and Discrete Mathematics\\
Ghent University, Krijgslaan 281, \\ Building S8
B 9000 Ghent, Belgium\\[2pt]
$^2$ School of Mathematical Sciences\\
Queen Mary University of London\\
London, United Kingdom \\[2pt]
  e-mail: michael.ruzhansky@ugent.be
\hfill Received: December 4, 2018 \\[6pt]
$^3$ Al--Farabi Kazakh National University\\
71 Al--Farabi ave., Almaty, 050040, Kazakhstan\\[2pt]
$^4$ Institute of Mathematics and Mathematical Modeling
125 Pushkin str., Almaty, 050010, Kazakhstan \\[2pt]
  e-mail: niyaz.tokmagambetov@ugent.be\\[2pt]
  e-mail: berikbol.torebek@ugent.be


\begin{thebibliography}{99}
 \normalsize


\bibitem{ABGM15} L. D. Abreu, P. Balazs, M. de Gosson, Z. Mouayn. Discrete coherent
states for higher Landau levels. {\em Ann. Physics}, {\bf 363} (2015), 337--353.

\bibitem{AKMR17} P.~Agarwal, E.~Karimov, M.~Mamchuev, M.~Ruzhansky. On boundary-value problems for a partial differential equation with Caputo and Bessel operators. {\it in: Recent applications of harmonic analysis to function spaces, differential equations, and data science}, (2017), 707--718, Appl. Numer. Harmon. Anal., Birkhauser/Springer.

\bibitem{A02} O.~P.~Agrawal.
\newblock Solution for a fractional diffusion-wave equation defined in a bounded domain.
\newblock {\it Nonlinear Dynam.} {\bf 29} (2002), 145--155.

\bibitem{A03} O.~P.~Agrawal.
\newblock Response of a diffusion-wave system subjected to deterministic and stochastic fields.
\newblock {\it Z. Angew. Math. Mech.} {\bf 83} (2003), 265--274.

\bibitem{AKT17}
N. Al-Salti, M. Kirane, B. T. Torebek.
On a class of inverse problems for a heat equation with involution perturbation. {\it Hacettepe Journal of Mathematics and Statistics}. {\bf 48}, No 3 (2019), 669--681.

\bibitem{B51} N.~K.~Bari.
\newblock Biorthogonal systems and bases in Hilbert space.
\newblock {\it Moskov. Gos. Univ. Uchenye Zapiski Matematika}, {\bf 148}, No 4 (1951), 69--107.

\bibitem{By1} L. Byszewski. Existence and uniqueness of solutions of nonlocal problems for hyperbolic
equation $u_{xt} = F(x, t, u, u_x).$ {\it J. Appl. Math. Stoch. Anal}. {\bf 3} (1990), 163--168.

\bibitem{By2} L. Bysezewski. Theorem about the existence and uniqueness of solution of a semilinear
evolution nonlocal Cauchy problem. {\it J. Math. Anal. Appl}. {\bf 162} (1991), 494--505.

\bibitem{By3} L. Bysezewski. Uniqueness of solutions of parabolic semilinear nonlocal boundary problems. {\it J.
Math. Anal. Appl}. {\bf 165} (1992), 472--478.

\bibitem{Chab} J. Chabrowski. On non-local problems for parabolic equations. {\it Nagoya Math. J}. {\bf 93} (1984), 109--131.

\bibitem{CLA08} J.~Chen, F.~Liu, V.~Anh.
\newblock Analytical solution for the time-fractional telegraph equation by the method of separating variables.
\newblock {\it J. Math. Anal. Appl.} {\bf 338} (2008), 1364--1377.

\bibitem{CG90}
L.~J.~Corwin and F.~P.~Greenleaf.
\newblock {\em Representations of nilpotent Lie groups and their applications}.
\newblock Cambridge Studies in Advanced Mathematics, Cambridge University Press, Cambridge, 18. Basic theory and examples, (1990).

\bibitem{Deh1} M. Dehghan. Numerical schemes for one-dimensional parabolic equations with nonstandard
initial condition. {\it Appl. Math. Comput}. {\bf 147} (2004), 321--331.

\bibitem{Deh2} M. Dehghan. Implicit collocation technique for heat equationwith non-classic initial condition.
{\it Int. J. Nonlin. Sci. Numer. Simul}. {\bf 7} (2006), 447--450.

\bibitem{DRT17} J.~Delgado, M.~Ruzhansky, N.~Tokmagambetov.
\newblock Schatten classes, nuclearity and nonharmonic analysis on compact manifolds with boundary.
\newblock {\it J. Math. Pures Appl.}, {\bf 107}, No 6 (2017), 758--783.

\bibitem{Dim82} I.~H.~Dimovski.
\newblock {\it Convolutional Calculus.}
\newblock Bulgarian Academy of Sciences, Sofia, (1982).


\bibitem{FR16}
V.~Fischer and M.~Ruzhansky.
\newblock {\em Quantization on nilpotent {L}ie groups}, volume 314 of {\em
  Progress in Mathematics}.
\newblock Birkh\"auser/Springer, [Open access book], (2016).


\bibitem{FR:Sobolev}
V.~Fischer and M.~Ruzhansky.
\newblock Sobolev spaces on graded groups.
\newblock {\em Ann. Inst. Fourier}, {\bf 67}, No 4 (2017), 1671--1723.

\bibitem{F28}
V. Fock,
Bemerkung zur Quantelung des harmonischen
Oszillators im Magnetfeld.
{\em Z. Phys. A}, {\bf 47}, No 5--6 (1928),
446--448.

\bibitem{FS-book}
G.~B. Folland and E.~M. Stein.
\newblock {\em Hardy spaces on homogeneous groups}, volume~28 of {\em
  Mathematical Notes}.
\newblock Princeton University Press, Princeton, N.J.; University of Tokyo
  Press, Tokyo, (1982).

\bibitem{G63} I.~M.~Gelfand.
\newblock Some questions of analysis and differential equations.
\newblock {\it Am. Math. Soc. Transl.}, {\bf 26} (1963), 201--219.

\bibitem{GM98} R.~Gorenflo, F.~Mainardi.
\newblock Signalling problem and Dirichlet-Neumann map for time-fractional diffusion-wave equation,
\newblock {\it Matimyas Mat.} {\bf 21} (1998), 109--118.

\bibitem{GM09} R.~Gorenflo, F.~Mainardi.
\newblock Some recent advances in theory and simulation of fractional diffusion processes.
\newblock {\it J. Comput. Appl. Math.} {\bf 299} (2009), 400--415.

\bibitem{HH13}
A.~Haimi and H.~Hedenmalm.
\newblock The polyanalytic Ginibre ensembles.
\newblock {\em J. Stat. Phys.}, {\bf 153}, No 1 (2013), 10--47.


\bibitem{HN-79}
B.~Helffer and J.~Nourrigat.
\newblock Caracterisation des op\'erateurs hypoelliptiques homog\`enes
  invariants \`a gauche sur un groupe de {L}ie nilpotent gradu\'e.
\newblock {\em Comm. Partial Differential Equations}, {\bf 4}, No 8 (1979), 899--958.

\bibitem{HR82}
B. Helffer and D. Robert.
Asymptotique des niveaux d'\'energie pour des hamiltoniens \`a un degr
\'e de libert\'e. {\em  Duke Math. J.}, {\bf 49}, No 4 (1982), 853--868.

\bibitem{HJL85}
A.~Hulanicki, J.~W.~Jenkins, and J.~Ludwig.
\newblock Minimum eigenvalues for positive, Rockland operators.
\newblock {\em Proc. Amer. Math. Soc.}, {\bf 94} (1985), 718--720.

\bibitem{JLTB12} H.~Jiang, F.~Liu, I.~Turner, K.~Burrage.
\newblock Analytical solutions for the multi-term time-fractional diffusion-wave/diffusion equations in a finite domain.
\newblock {\it Computers and Mathematics with Applications}, {\bf 64} (2012), 3377--3388.

\bibitem{KMR18}
E.~Karimov, M.~Mamchuev, M.~Ruzhansky.
\newblock Non-local initial problem for second order time-fractional and space-singular equation.
\newblock {\it Hokkaido Math. J.}, to appear, (2018).

\bibitem{KST06}
A.~A.~Kilbas, H.~M.~Srivastava, J.~J.~Trujillo.
\newblock {\it Theory and Applications of Fractional Differential Equations.}
\newblock  Elsevier, North-Holland, (2006).

\bibitem{KST17}
M.~Kirane, B.~Samet, B.~T.~Torebek.
Determination of an unknown source term temperature distribution for the sub-diffusion equation at the initial and final data. {\it Electronic Journal of Differential Equations}. {\bf 2017} (2017), 1--13.

\bibitem{KT19} M. Kirane, B. T. Torebek. Extremum principle for the Hadamard derivatives and its application to nonlinear fractional partial differential equations. {\it Fractional Calculus and Applied Analysis}. {\bf 22}, No 2 (2019), 358--378; DOI: 10.1515/fca-2019-0022;

\bibitem{Yam1} A. Kubica, M. Yamamoto, Initial-boundary value problems for fractional diffusion equations with time-dependent coefficients. {\it Fract. Calc. Appl. Anal}. {\bf 21} (2018), 276--311.

\bibitem{L30}
L. Landau.
Diamagnetismus der Metalle.
{\em Z. Phys. A}, {\bf 64}, No 9--10 (1930), 629--637.

\bibitem{LLY15}
Z.~Li, Y.~Liu, M.~Yamamoto.
\newblock Initial-boundary value problems for multi-term time-fractional diffusion equations with positive constant coefficients.
\newblock {\it Appl. Math. Comput.}, {\bf 257} (2015), 381--397.

\bibitem{L17} Y.~Liu.
\newblock Strong maximum principle for multi-term time-fractional diffusion equations and its application to an inverse source problem.
\newblock {\it Computers and Mathematics with Applications}. {\bf 73} (2017), 96--108.

\bibitem{LG99}
Y.~Luchko, R.~Gorenflo.
\newblock An operational method for solving fractional differential equations
with the Caputo derivatives.
\newblock {\it Acta Math. Vietnam.}, {\bf 24} (1999), 207--233.

\bibitem{L09}
Y.~Luchko.
\newblock Maximum principle for the generalized time-fractional diffusion equation.
\newblock {\it J. Math. Anal. Appl.} {\bf 351} (2009), 218--223.

\bibitem{L11}
Y.~Luchko.
\newblock Initial-boundary-value problems for the generalized multi-term time-fractional diffusion equation.
\newblock {\it J. Math. Anal. Appl.} {\bf 374} (2011), 538--548.

\bibitem{M97} F.~Mainardi.
\newblock Fractional calculus: some basic problems in continuum and statistical mechanics, in: A. Carpinteri, F. Mainardi (Eds.)
\newblock {\it Fractals and Fractional Calculus in Continuum Mechanics}, Springer, New York, 291--348, (1997).

\bibitem{N03}
A.~M.~Nakhushev.
\newblock {\it Fractional calculus and its applications.}
\newblock Fizmatlit, Moscow, (2003).

\bibitem{3}
M.~A.~Naimark.
\newblock \textit{Linear Differential Operators.}
\newblock Ungar, New York, (1968).

\bibitem{NiRo:10}
F.~Nicola and L.~Rodino.
\newblock {\em Global pseudo-differential calculus on {E}uclidean spaces},
  volume~4 of {\em Pseudo-Differential Operators. Theory and Applications}.
\newblock Birkh{\"a}user Verlag, Basel, (2010).

\bibitem{N86} R.~R.~Nigmatullin.
\newblock The realization of the generalized transfer in a medium with fractal geometry.
\newblock {\it Phys. Status Solidi B}, {\bf 133} (1986), 425--430.

\bibitem{OS74} K.~B.~Oldham, J.~Spanier.
\newblock {\it The Fractional Calculus}.
\newblock Academic Press, New York, (1974).

\bibitem{RS80}
M.~Reed and B.~Simon.
\newblock {\em Methods of Modern Mathematical Physics}, V.~1 of {\em
  Functional Analysis,
revised and enlarged edition}.
\newblock Academic Press, (1980).

\bibitem{Rockland}
C.~Rockland.
\newblock Hypoellipticity on the {H}eisenberg group-representation-theoretic
  criteria.
\newblock {\em Trans. Amer. Math. Soc.}, {\bf 240} (1978), 1--52.

\bibitem{RS76}
L.~P.~Rothschild and E.~M.~Stein.
\newblock Hypoelliptic differential operators and nilpotent
groups.
\newblock {\em Acta Math.}, {\bf 137} (1976), 247--320.

\bibitem{RuzSur} M. Ruzhansky, D. Suragan., N. Yessirkegenov. Hardy-Littlewood, Bessel-Riesz, and fractional integral operators in anisotropic Morrey and Campanato spaces. {\it Fract. Calc. Appl. Anal.} {\bf 21} (2018), 577--612.

\bibitem{RT16} M.~Ruzhansky, N.~Tokmagambetov.
\newblock Nonharmonic analysis of boundary value problems.
\newblock {\it Int. Math. Res. Not. IMRN}, {\bf 12} (2016), 3548--3615.

\bibitem{RT16a}
M.~Ruzhansky, N.~Tokmagambetov.
\newblock Nonharmonic analysis of boundary value problems without WZ condition.
\newblock {\it Math. Model. Nat. Phenom.}, {\bf 12} (2017), 115--140.

\bibitem{RT17b}
M.~Ruzhansky, N.~Tokmagambetov.
\newblock Very weak solutions of wave equation for Landau Hamiltonian with irregular electromagnetic field.
\newblock {\it Lett. Math. Phys.}, {\bf 107} (2017), 591--618.

\bibitem{RT18}
M.~Ruzhansky, N.~Tokmagambetov.
\newblock On a very weak solution of the wave equation for a Hamiltonian in a singular electromagnetic field.
\newblock {\em Math. Notes}, {\bf 103} (2018), 856--858.


\bibitem{RT18b}
M.~Ruzhansky, N.~Tokmagambetov.
\newblock Wave equation for operators with discrete spectrum and irregular propagation speed.
\newblock {\em Arch. Ration. Mech. Anal.}, {\bf 226}, No 3 (2017), 1161--1207.

\bibitem{RuzTT20} M. Ruzhansky, N. Tokmagambetov, B. T. Torebek. Bitsadze-Samarskii type problem for the integro-differential diffusion-wave equation on the Heisenberg group, {\it Integral Transforms and Special Functions}, {\bf 31} (2020), 1--9.

\bibitem{Yam2} K. Sakamoto, M. Yamamoto. Initial value/boundary value problems for fractional diffusion-wave equations and applications to some inverse problems. {\it J. Math. Anal. Appl}. {\bf 382} (2011), 426--447.

\bibitem{SKM87}
S.~G.~Samko, A.~A.~Kilbas, and O.~I.~Marichev.
{\it Fractional Integrals and Derivatives, Theory and Applications.}
Amsterdam, (1993).

\bibitem{SW89} W.~R.~Schneider, W.~Wyss.
\newblock Fractional diffusion and wave equations.
\newblock {\it J. Math. Phys.} {\bf 30} (1989), 134--144.

\bibitem{TT16}
N.~Tokmagambetov, T.~B.~Torebek.
\newblock Fractional Analogue of Sturm--Liouville Operator.
\newblock {\it Documenta Math.}, {\bf 21} (2016), 1503--1514.

\bibitem{TT18a}
N.~Tokmagambetov, B.~T.~Torebek.
\newblock Green's formula for integro--differential operators.
\newblock {\em J. Math. Anal. Appl.}, {\bf 468}, No 1 (2018), 473--479.

\bibitem{TT18b} N.~Tokmagambetov, B.~T.~Torebek. Fractional Sturm--Liouville Equations: Self--Adjoint Extensions.
{\it Complex Analysis and Operator Theory}. {\bf 13}, No 5 (2019), 2259--2267.

\bibitem{TT18c} N.~Tokmagambetov, B.~T.~Torebek.
\newblock Anomalous Diffusion Phenomena with Conservation Law for the Fractional Kinetic Process.
\newblock {\em Mathematical Methods in the Applied Sciences}, {\bf 41}, No 17 (2018), 8161--8170.

\bibitem{ToTa17} B.~T.~Torebek, R.~Tapdigoglu. Some inverse problems for the nonlocal heat equation with Caputo fractional derivative. {\it Mathematical Methods in the Applied Sciences}. {\bf 40}, No 18 (2017), 6468--6479.

\bibitem{W86} W.~Wyss.
\newblock The fractional diffusion equation.
\newblock {\it J. Math. Phys.} {\bf 27} (1986), 2782--2785.

\bibitem{Zacher} R. Zacher. Weak solutions of abstract evolutionary integro-differential equations in Hilbert spaces. {\it Funkcialaj Ekvacioj}. {\bf 52} (2009), 1--18.
\end{thebibliography}
\end{document}